\DeclareRobustCommand\full  {\tikz[baseline=-0.6ex]\draw[thick] (0,0)--(0.5,0);}
\DeclareRobustCommand\dotted{\tikz[baseline=-0.6ex]\draw[thick,dotted] (0,0)--(0.54,0);}
\newlength\replength
\newcommand\repfrac{.40}
\newcommand\rulewidth{.6pt}
\def\dashht{.5\dimexpr\ht\strutbox-\dp\strutbox\relax}
\newcommand\tdashfill[1][\repfrac]{\cleaders\hbox to \replength{%
  \smash{\rule[\dashht]{\repfrac\replength}{\rulewidth}%
  \kern.5\dimexpr\replength-\repfrac\replength-2.5pt\relax%
  \raisebox{\dimexpr\dashht-.3pt}{.}}}\hfill}
\newcommand{\nn}{\nonumber}
\newcommand{\D}{\Delta}
\definecolor{darkgreen}{rgb}{0.0, 0.2, 0.13}
\newcommand{\norma}[1]{{\left\|#1\right\|}}
\g@addto@macro\normalsize{%
  \setlength\abovedisplayskip{4pt}
  \setlength\belowdisplayskip{4pt}
  \setlength\abovedisplayshortskip{4pt}
  \setlength\belowdisplayshortskip{4pt}
}
\numberwithin{equation}{section}
\crefname{section}{Section}{Sections}
\crefname{subsection}{Subsection}{Subsections}
\crefname{condition}{Condition}{Conditions}
\crefname{hypothesis}{Hypothesis}{Conditions}
\crefname{assumption}{Assumption}{Assumptions}
\crefname{lemma}{Lemma}{Lemmas}
\crefname{definition}{Definition}{Definitions}
\newtheorem{theorem}{Theorem}[section]
 \newtheorem{lemma}[theorem]{Lemma}
\newtheorem{corollary}[theorem]{Corollary}
 \newtheorem{definition}[theorem]{Definition}
\numberwithin{equation}{section}
\def\N{\mathbb{N}}
\def\CC{{\rm \kern.24em \vrule width.02em height1.4ex depth-.05ex \kern-.26emC}}
\def\TagOnRight
\def\AA{{it I} \hskip-3pt{\tt A}}
\def\QQ{\rlap {\raise 0.4ex \hbox{$\scriptscriptstyle |$}} {\hskip -0.1em Q}}
\newcommand{\vo}{\vec{o}\@ifnextchar{^}{\,}{}}
\def\YYint#1#2#3{{\setbox0=\hbox{$#1{#2#3}{\iint}$}
    \vcenter{\hbox{$#2#3$}}\kern-.50\wd0}}
\def\XXint#1#2#3{{\setbox0=\hbox{$#1{#2#3}{\int}$}
    \vcenter{\hbox{$#2#3$}}\kern-.50\wd0}}
\def\namedlabel#1#2{\begingroup
   \def\@currentlabel{#2}%
   \label{#1}\endgroup
}
\newcommand{\rmh}[1]{\mathpalette{\raisem@th{#1}}}
\newcommand{\raisem@th}[3]{\hspace*{-1pt}\raisebox{#1}{$#2#3$}}
\newcommand{\descref}[2]{\hyperref[#1]{\textnormal{\textcolor{black}{(}\textcolor{black}{\bf #2}\textcolor{black}{)}}}}
\newcommand{\dref}[2]{\hyperref[#1]{\textcolor{black}{(}\textcolor{black}{\bf #2}\textcolor{black}{)}}}
\newcommand{\be} {\begin{eqnarray}}
\newcommand{\ee} {\end{eqnarray}}
\newcommand{\Bea} {\begin{eqnarray*}}
\newcommand{\Eea} {\end{eqnarray*}}
\newcommand{\dott}{\, \cdot\,}
\newcommand{\R}{\mathbb{R}}
\newcommand{\sgn}{\mathop\mathrm{sgn}}
\renewcommand{\L}[1]{\mathbf{L^#1}}
\newcommand{\Z}{\mathbb{Z}}
\DeclareMathOperator{\lip}{Lip}
\DeclareMathOperator{\TV}{TV}
\newcommand{\abs}[1]{\left| #1\right|}
\newcounter{whitney}
\newcounter{ineqcounter}
\def\ps@pprintTitle{%
\let\@oddhead\@empty
\let\@evenhead\@empty
\def\@oddfoot{}%
\let\@evenfoot\@oddfoot}
\renewcommand{\epsilon}{\varepsilon}
\renewcommand{\L}[1]{\mathbf{L^#1}}
\newcommand{\modulo}[1]{{\left|#1\right|}}
\newcommand{\interi}{{\mathbb{Z}}}
\renewcommand{\d}[1]{\mathinner{\mathrm{d}{#1}}}
\begin{document}

\begin{frontmatter}

\title{On the accuracy of the finite volume approximations to nonlocal conservation laws}

\author[myaddress1]{Aekta Aggarwal}
\ead{aektaaggarwal@iimidr.ac.in}

\address[myaddress1]{Operations Management and Quantitative Techniques, Indian Institute of Management\\Prabandh Shikhar, Rau--Pithampur Road, Indore, Madhya Pradesh 453556, India}

\author[myaddress2]{Helge Holden}
\ead{helge.holden@ntnu.no}

 \address[myaddress2]
 {Department of Mathematical Sciences, 
NTNU --- Norwegian University of Science and Technology,\\ NO–7491 Trondheim, Norway.}

\author[myaddress2]{Ganesh Vaidya}
\ead{ganesh.k.vaidya@ntnu.no}

\begin{abstract}
In this article, we discuss the error analysis for  a certain class of monotone finite volume schemes approximating nonlocal scalar conservation laws, modeling traffic flow and crowd dynamics, without any additional assumptions on monotonicity or linearity of the kernel $\mu$ or the flux $f$. We first prove a novel Kuznetsov-type lemma for this class of PDEs and thereby show that the finite volume approximations converge to the entropy solution at the rate of $\sqrt{\Delta t}$ in $L^1(\R)$. To the best of our knowledge, this
is the first proof of any type of convergence rate for this class of conservation laws.
 We
also present numerical experiments to illustrate this result. \end{abstract}

\begin{keyword}
nonlocal conservation laws \sep traffic flow	\sep convergence rate \sep finite-volume scheme
 	\medskip
\MSC[2020] 35L65, 65M25, 35D30,  65M12, 65M15
 \end{keyword}

\end{frontmatter}
\section{Introduction}
The celebrated
Lighthill--Whitham--Richards (LWR) model{\color{blue}~\cite{lighthill1955kinematic,richards1956shock}} given by
\begin{equation}
 \label{LWR}
u_t+(u \nu(u))_x=0,\,\qquad (t,x) \in (0,\infty)\times \R,   
\end{equation}
with $u$ being the mean traffic density and $\nu$ the mean traffic speed, is one of the widely used models in traffic flow modeling. However, being a non-linear hyperbolic conservation law, it can have solutions with discontinuities and infinite accelerations, adversely impacting its capability to capture  physical traffic phenomena effectively. Thus, over last decade, the parallel class of conservation laws with \textit{nonlocal} parts in the flux is gaining particular interest in the modeling as well as mathematical community, where a convolution is introduced in the flux to produce Lipschitz-continuous velocities ensuring
bounded accelerations. The two most popular strategies are to evaluate the traffic speed, from either averaging the traffic density, or
 averaging over the velocity, leading to the following two conservation laws, 
 \begin{align}
u_t+\Big(u\nu(u*\mu)\Big)_x&=0,\label{134}\\
u_t+\Big(u(\nu(u)*\mu)\Big)_x&=0,\label{135}
\end{align} 
where $\nu,\mu \in (C^2 \cap W^{2,\infty}) (\R),$ and 
\[(\mu *\; u) (t,x)=
  \displaystyle\displaystyle \int\limits_{\R}\mu (x-\xi) \; u (t,\xi) \, \d\xi.\]
Such conservation laws have been studied in the recent literature, see \cite{friedrich2018godunov,ColomboGaravelloLecureux2012, ColomboLecureuxPerDafermos,aggarwal2016crowd,aggarwal2016crowd,blandin2016well,keimer2018nonlocal,keimer2017existence,friedrich2022conservation,coclite2023nonlocal}, and the references therein. From the point of view of modeling, this \emph{nonlocal} nature is particularly suitable in describing the behavior of
traffic, where each vehicle moves according to its evaluation of the density and its variations within its horizon. 

 The article studies a very general class of these nonlocal conservation laws,  namely
\begin{align}
 \label{IVP:eq}
  \partial_t u
  +
  \partial_x (f(u) \nu(\mu*\beta(u)))&= 0,\,\quad\quad \quad \quad \quad \,\,\,(t,x) \in Q_T:=(0,T)\times \R,
  \\ \label{IVP:data}
  u(0,x)&=u_0(x), \quad\quad \quad \,\, \,\,\,x \in \R,
\end{align}  
where 
\begin{enumerate}[label=(\textbf{H\arabic*})]
	\item \label{A1} $f\in \lip(\R)$  with $ f(0)=0,$ 
 \item $\beta, \nu \in (C^2 \cap   \operatorname{BV}  \cap \, W^{2,\infty}) (\R),$ with $\nu(0)=\beta(0)=0,$
 \item $\mu \in (C^2 \cap   \operatorname{BV}  \cap \, W^{2,\infty}) (\R),$
 \end{enumerate}
with $f$ being non-linear (in contrast to \eqref{134} or \eqref{135}).
 They serve as working models for a variety of real life applications, for example, sedimentation
models~\cite{betancourt2011nonlocal}, crowd dynamics
models~\cite{ColomboGaravelloLecureux2012, ColomboHertyMercier,
ColomboLecureuxPerDafermos}, vehicular
traffic \cite{blandin2016well,
ColomboHertyMercier}, biological
applications in structured population dynamics~\cite{Perthame2007}, supply chain models\cite{ColomboHertyMercier}, granular material dynamics \cite{AmadoriShen2012}, as well as  conveyor belt dynamics ~\cite{gottlich2014modeling}.

The wellposedness of this class has been of interest in the last few years. The local counterpart of \eqref{IVP:eq}--\eqref{IVP:data} enjoys a rich literature, with \cite{kruvzkov1970first} as one of the pioneering papers to fix the wellposedness of the entropy solutions for such PDEs. 
Similar to its local counterpart, since $f$ can be possibly nonlinear, there can be multiple weak solutions of IVP~\eqref{IVP:eq}--\eqref{IVP:data}.   Hence, an additional entropy condition is required to single out a unique solution.
\begin{definition}\label{ent}
    A function $u\in C([0,T];L^1(\R))\cap L^{\infty}(\overline{Q}_T)$  is an entropy solution of the IVP~\eqref{IVP:eq}--\eqref{IVP:data}, 
if for every $k\in\R,$ and for all non-negative $\phi\in C_c^{\infty}([0,T)\times \R)$,
\begin{align}\nonumber
&\int\limits_{Q_T} |u(t,x)-k|\phi_t\d t \d x +\int\limits_{Q_T}\sgn (u(t,x)-k) \mathcal{U}(t,x)(f(u)-f(k))\phi_x \d t \d x \\ 
&-\int\limits_{Q_T} f(k) \sgn (u(t,x)-k)\mathcal{U}_x(t,x)\phi \d t \d x
+\int\limits_{\R} |u_0(x)-k|\phi(0,x) \d x \geq 0,\label{kruz}
\end{align}
where $\mathcal{U}(t,x)=\nu(\mu*\beta(u(t))(x))$.

\end{definition}
With some appropriate modifications, the proof of \cite{betancourt2011nonlocal} can be adapted to prove that any two entropy solutions satisfying Definition \ref{ent} are equal, while existence of these solutions has been proven in \cite{AmorimColomboTeixeira, aggarwal2015nonlocal} for non linear $f$ and in \cite{boudin2012numerical} for linear $f$, via the convergence of finite volume approximations. These articles dealing with existence of solutions establish that the schemes converge to the entropy solution $u \in C
  \left([0,T];L^1(\R)\right)$. 
  
What remains unexplored is  to analyze the rate of convergence, i.e., how fast the error $||u^{\Delta}(T,\dott)-u(T,\dott)||_{L^1(\R)}$ made by the numerical solution $u^{\Delta}$ in approximating the exact solution
$u$  goes to zero as the mesh size $\Delta x$ goes to zero.  That is the precise aim of this article. In other words, we look for an (optimal) $\alpha$ satisfying
\begin{equation}\label{alpha}
||u^{\Delta}(T,\dott)-u(T,\dott)||_{L^1(\R)} \leq C {\Delta x}^{\alpha},
\end{equation}
with $C$ being an appropriate positive constant. 
To achieve this, we first prove a Kuznetsov-type lemma using the entropy formulation \eqref{kruz}. We further estimate the relative entropy functional involving the solution $u$ and numerical approximation $u^{\Delta}$ to obtain \eqref{alpha} with an optimal $\alpha=1/2,$ same as the one obtained in \cite{sabac_optimal,Kuznetsov} for local fluxes (homogeneous).  To the best of our knowledge, this is the first result in this direction for such nonlocal conservation laws. It is to be noted that the results of the article hold under no additional assumptions on monotonicity/linearity of the kernel $\mu$ or the flux $f$ or $\nu$.

The paper is organized as follows. In Section \ref{WP}, we discuss the wellposedness of \eqref{IVP:eq}--\eqref{IVP:data} via convergence of a general class of monotone finite volume approximations. In Section \ref{EE}, we prove the Kuznetsov-type lemma for \eqref{IVP:eq}--\eqref{IVP:data} and obtain the rate of convergence as $1/2.$ In Section \ref{exis2}, we also briefly comment on the extensions to higher dimensions. In Section \ref{NR}, we present some numerical experiments which illustrate the theory. 
\section{Finite volume approximations and wellposedness}\label{WP} 

We now introduce the notations to be used in the article:
\begin{enumerate}[i.]
\item $|u|_{L^\infty_t \operatorname{BV}_x}:=\sup_{t\in[0,T]} \TV(u(t,\dott)).$ \item $|u|_{\lip_t L^1_x}:=\sup_{0\leq t_1<t_2\leq T}\frac{\norma{u(t_1,\dott)-u(t_2,\dott)}_{L^1(\R)}}{|t_1-t_2|}.$ 
    \item $K:=\{u:\overline{Q}_T \rightarrow \R: ||u||_{L^{\infty}(\overline{Q}_T)}+|u|_{L^\infty_t \operatorname{BV}_x}<\infty\}.$
  \item$\gamma(u,\sigma):=\sup_{\substack{
    \abs{t_1-t_2} \leq \sigma\\  0\leq t_1< t_2 \leq T }} \norma{u(t_1,\dott)-u(t_2,\dott)}_{L^1(\R)}.$
\end{enumerate}
\subsection{Uniqueness of the entropy solution}
Any two entropy solutions of the IVP \eqref{IVP:eq}--\eqref{IVP:data} are equal. More precisely, we have the following result:
\begin{theorem}[Uniqueness]
Let $u,v\in C([0,T];L^1(\R)) \cap (L^\infty_t \operatorname{BV}_x) (\overline{Q}_T)$ be two entropy solutions of the IVP \eqref{IVP:eq}--\eqref{IVP:data} corresponding to the initial data ${u_0}$ and ${v_0}$ respectively.
Then, there exists a constant 
$$
\mathcal{M}=\mathcal{M}(f,\mu, \eta, \nu, \beta, \norma{u}_{L^1(Q_T)},\norma{v}_{L^1(Q_T)},|u|_{L^\infty_t \operatorname{BV}_x},|v|_{L^\infty_t \operatorname{BV}_x},|u|_{L^\infty_t \operatorname{BV}_x},T)>0
$$
such that
\begin{equation*}
\norma{u(t,\dott)-v(t,\dott)}_{L^1(\R)} \leq \norma{u_0-v_0}_{L^1(\R)}(1+\mathcal{M}t\exp(\mathcal{M}t)), \quad t\in[0,T].
\end{equation*}
In particular, if $u_0=v_0,$ then $u=v$ a.e.~in $\overline{Q}_T$.
\end{theorem}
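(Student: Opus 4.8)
The plan is to adapt Kru\v{z}kov's doubling-of-variables technique to the nonlocal setting, the essential new difficulty being that the transport velocity $\mathcal{U}[u](t,x)=\nu(\mu*\beta(u(t))(x))$ depends on the solution itself, so that $u$ and $v$ are transported by \emph{different} velocity fields $\mathcal{U}[u]$ and $\mathcal{U}[v]$. First I would write the entropy inequality \eqref{kruz} for $u$ with Kru\v{z}kov constant $k=v(s,y)$ and a test function $\phi=\phi(t,x,s,y)$, and symmetrically the inequality for $v$ with $k=u(t,x)$; adding the two and choosing $\phi$ as a product of mollifiers concentrating on the diagonals $t=s$ and $x=y$ is the standard route, exactly as in the argument of \cite{betancourt2011nonlocal} that the excerpt refers to.

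Second, I would pass to the limit as the mollification parameters tend to zero. The contributions from the time-derivative term and from the genuinely local part of the flux behave as in the classical Kru\v{z}kov argument and collapse to $\norma{u(t,\dott)-v(t,\dott)}_{L^1(\R)}-\norma{u_0-v_0}_{L^1(\R)}$ together with a non-positive (dissipative) flux term. What survives in addition are the nonlocal contributions: the mismatch between $\mathcal{U}[u]$ and $\mathcal{U}[v]$ inside the flux term $\sgn(u-k)\,\mathcal{U}\,(f(u)-f(k))\phi_x$, and the source-like term $f(k)\sgn(u-k)\,\mathcal{U}_x\,\phi$ carrying the spatial derivative of the velocity, which does not drop out as it would for a homogeneous flux.

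Third --- and this is the heart of the matter --- I would control these nonlocal terms by exploiting the regularizing effect of the convolution. Writing $\mu*\beta(u)-\mu*\beta(v)=\mu*(\beta(u)-\beta(v))$ and using Young's inequality $\norma{\mu*g}_{L^\infty(\R)}\leq\norma{\mu}_{L^\infty(\R)}\norma{g}_{L^1(\R)}$ together with the Lipschitz continuity of $\beta$ and $\nu$ gives the pointwise bound $\norma{\mathcal{U}[u](t)-\mathcal{U}[v](t)}_{L^\infty(\R)}\leq C\norma{u(t,\dott)-v(t,\dott)}_{L^1(\R)}$; transferring the derivative onto $\mu$ via $\partial_x(\mu*g)=(\partial_x\mu)*g$ and invoking $\mu,\nu,\beta\in W^{2,\infty}$ (the second derivative $\nu''$ enters when comparing $\nu'(\mu*\beta(u))$ with $\nu'(\mu*\beta(v))$) yields the same $L^1$-control on $\partial_x(\mathcal{U}[u]-\mathcal{U}[v])$. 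The uniform $L^\infty_t\operatorname{BV}_x$ bounds on $u$ and $v$ are used here to keep the factors multiplying these differences bounded and to guarantee that the $\mathcal{U}_x$ term is integrable. Collecting everything produces the integral inequality $\norma{u(t,\dott)-v(t,\dott)}_{L^1(\R)}\leq\norma{u_0-v_0}_{L^1(\R)}+\mathcal{M}\int_0^t\norma{u(s,\dott)-v(s,\dott)}_{L^1(\R)}\,\d s$; applying Gr\"onwall to get $\norma{u(s,\dott)-v(s,\dott)}_{L^1(\R)}\leq\norma{u_0-v_0}_{L^1(\R)}e^{\mathcal{M}s}$ and substituting once more into the integral gives precisely the stated factor $1+\mathcal{M}t\exp(\mathcal{M}t)$, and the choice $u_0=v_0$ yields uniqueness.

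The main obstacle I anticipate is the rigorous treatment of the velocity-derivative term $f(k)\sgn(u-k)\,\mathcal{U}_x\,\phi$ after doubling: unlike in the local theory this term does not vanish, and estimating it cleanly requires both the spatial BV regularity of the solutions and the second-order smoothness ($W^{2,\infty}$) of $\mu,\nu,\beta$ so that the derivative passes through the convolution. The remaining work is bookkeeping-heavy: tracking how $\mathcal{M}$ depends on $\norma{u}_{L^1(Q_T)}$, $\norma{v}_{L^1(Q_T)}$, the BV seminorms, and the norms of $f,\mu,\nu,\beta$ uniformly through the limit passage must be done with care.
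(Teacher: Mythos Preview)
Your proposal is correct and follows essentially the same route the paper takes: the paper's own proof merely cites the continuous dependence estimates of \cite{karlsen2003uniqueness} applied as in \cite[Thm.~4.1]{betancourt2011nonlocal}, and alternatively its own Kuznetsov-type Lemma~\ref{lemma:kuz} in the limit $\epsilon,\epsilon_0\to0$, all of which amount to precisely the doubling-of-variables argument with the nonlocal velocity difference controlled via $\norma{\mathcal{U}[u]-\mathcal{U}[v]}_{L^\infty}+\norma{\partial_x(\mathcal{U}[u]-\mathcal{U}[v])}_{L^\infty}\le C\norma{u-v}_{L^1}$ followed by Gr\"onwall. Your identification of the $\mathcal{U}_x$ term as the main technical point matches exactly the $I_{\mathcal{U}_x}$ estimate in the proof of Lemma~\ref{lemma:kuz}.
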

\begin{proof}
Note that the nonlocal coefficient of the flux function considered in this article is more general than the ones in all the previous results on the uniqueness of nonlinear-nonlocal conservation laws. However, the continuous dependence estimates for the entropy solution of the conservation laws (local)  derived in \cite[Thm.~1.3]{karlsen2003uniqueness} can still be invoked to prove the desired weighted contraction estimates as in
    \cite[Thm.\ 4.1]{betancourt2011nonlocal}. Alternatively, this can also be seen as a consequence of the Kuznetsov-type estimates derived in the sequel~(see Lemma~\ref{lemma:kuz}), by sending $\epsilon,\epsilon_0 \rightarrow 0.$
    \end{proof}
\subsection{Existence via numerical approximations}\label{exis}
For $\Delta x,\Delta t>0,$ and $\lambda:=\Delta t/\Delta x,$ consider equidistant spatial grid points $x_i:=i\Delta x$ for $i\in \Z$  and temporal grid points $t^n:=n\Delta t$ 
for non-negative integers $n\le N$, such that $T =N\Delta t$.  Let $\chi_i(x)$ denote the indicator function of $C_i:=[x_{i-1/2}, x_{i+1/2})$, where $x_{i+1/2}=\frac{1}{2}(x_i+x_{i+1})$ and let
$\chi^n(t)$ denote the indicator function of $C^{n}:=[t^n,t^{n+1})$. We approximate the initial data according to:
\begin{equation}\label{apx:data}
u^{\Delta}_0(x):=\sum_{i\in\Z}\chi_i(x)u^{0}_i\quad \mbox{where }u^{0}_i=\int\limits_{C_i}u_0(x)\d x\mbox{ for }i\in\Z.
\end{equation}

We define a piecewise constant approximate solution $u^{\Delta}$ to~\eqref{IVP:eq}
by
\[
  u^{\Delta} (t,x) =  u^{n}_{i}
  \quad \mbox{ for } \quad
(t,x)\in C^{n}\times C_i, \quad n\in\N, i\in\Z, 
\]
through the following marching formula:
\begin{align}\nn
   u^{n+1}_i&=H(\nu(c^{n}_{i-1/2}),\nu(c^{n}_{i+1/2}),u_{i-1}^{n},u_i^{n},u_{i+1}^{n})
    \\ \nn 
    & := 
   u^{n}_i- \lambda \bigl[
    \mathcal{F} (\nu(c^n_{i+1/2}),u_i^{n},u_{i+1}^{n})
    - 
    \mathcal{F} (\nu(c^n_{i-1/2}),u_{i-1}^{n},u_{i}^{n})
     \bigr]\\ 
     \label{scheme2}
     &:=u^{n}_i- \lambda \bigl[
    \mathcal{F}^n_{i+1/2} (u_i^{n},u_{i+1}^{n})
    - 
    \mathcal{F}^n_{i-1/2} (u_{i-1}^{n},u_{i}^{n})
     \bigr], 
\end{align} 
where
the convolution term $\mu*\beta(u)$ is computed through a standard quadrature formula using the same space mesh, i.e.,
\begin{equation}\label{eq:conv}
c_{\strut i+1/2}^{n}:=\Delta x\sum\limits_{p\in Z} \mu_{i+1/2-p}\beta(u^{n}_{p+1/2}) \approx \int\limits_{\R} \mu(x_{i+1}-y)\beta(u^{\D}(t^n)(y))\d y, 
\end{equation}
with $u^{n}_{p+1/2}$ being any convex combination of
$u^{n}_{p}$ and $u^{n}_{p+1},$  and 
 $\mu_{i+1/2} = \mu (x_{i+1/2}).$
Further, $\mathcal{F} (\nu(c^n_{i+1/2}),u_i^{n},u_{i+1}^{n})
    $ denotes the numerical approximation of the flux $f(u) \nu(\mu*\beta(u))$ at the interface $x=x_{i+1/2}$ for $i\in \Z,$ with $H$ being increasing in the last three arguments. The  approximations generated by the scheme, namely $u_i^{n} \approx u(t^n,x_i)$
are extended to a function defined on ${Q}_T$ via
\begin{equation}\label{def_u_De}
u^{\Delta}(t,x) =\sum_{n=0}^{N-1} \sum_{i\in \mathbb{Z}} \chi_i(x) \chi^n(t) u_i^{n}.
\end{equation}
In general, $\mathcal{F}$ can be defined as an appropriate nonlocal extension of any monotone numerical flux, meant for local conservation laws. Here, we present examples of celebrated Lax--Friedrichs flux and Godunov flux.
\begin{enumerate}
    \item \textbf{Lax--Friedrichs type flux:} For any $ \theta \in \left(0,\frac{2}{3} \right),$ define
\begin{equation*}
  \mathcal{F}_{\rm LF}(a,b,c)
   = 
  \frac{a}{2}\Big( f(b)
    +
    f(c)\Big)
  -
  \theta\frac{(c-b)}{2\, \lambda},
\end{equation*} 
where  $\Delta t$ is chosen in order to satisfy
the CFL condition
\begin{equation}\label{CFL_LF}
   \lambda \le \displaystyle\frac{\min(1, 4-6\theta,6\theta )}{1+6\abs{f}_{\lip(\R)}\norma{\nu}_{L^\infty(\R)}}.\end{equation}
\item \textbf{Godunov type flux:}
\begin{equation*}
\mathcal{F}_{\rm Godunov}(a,b,c)= a F_{\rm Godunov}(b,c),
\end{equation*} 
where the function $F_{\rm Godunov}$ is the Godunov flux for the corresponding local conservation law $u_t+f(u)_x=0$, and  $\Delta t$ is chosen in order to satisfy
the CFL condition
\begin{equation*}
   \lambda \abs{f}_{\lip(\R)}\norma{\nu}_{L^\infty(\R)}\leq \frac16.
   \end{equation*}

\end{enumerate}
\begin{theorem}[Existence] \label{thm:2.2} Assume that (\textbf{H1})--(\textbf{H3}) hold.
For non-negative initial data $u_0\in (L^1\cap BV) (\R)$ and $\D x >0,$ there exist constants $\mathcal{L}_i, i=1,2,3,$ independent of $\D t$ such that the sequence of approximations $u^n$ defined by \eqref{scheme2} satisfies the following for all $i\in \Z, n\in \N$ and $0\leq n \leq N$:
\begin{enumerate}
\item Positivity:
\begin{equation}\label{apx:positivity}
u_i^n\geq0.
\end{equation}
\item $L^{\infty}$ estimate:
\begin{equation}\label{apx:Linf}
\norma{u^n}_{L^{\infty}}\leq \exp(\mathcal{L}_1 T)\norma{u^0}_{L^{\infty}}.
\end{equation}
\item $L^{1}$ estimate:
\begin{equation}\label{apx:L1}
\norma{u^{n}}_{L^1}=\norma{u^0}_{L^1}.
\end{equation}
\item BV estimate:
\begin{equation}\label{apx:bv}
\TV(u^{n}) \leq \exp(\mathcal{L}_2T)(\TV(u^{0}) +\mathcal{L}_2).
\end{equation}
\item Time continuity:
\begin{equation}\label{apx:time}
\D x \sum\limits_{i \in \Z}\abs{u_i^m-u_i^n} \leq \mathcal{L}_3|m-n| \D t, \quad \quad m,n \in \N\cup\{0\}.
\end{equation}
\item Discrete entropy inequality:
For any $k\in\R$ we have
\begin{align} \nn
\modulo{u_i^{n+1}-k}-\modulo{u_i^{n}- k}+&\lambda\big(\mathcal{G}^{n}_{i+1/2}(u_i^{n} ,u_{i+1}^{n},k )-\mathcal{G}^{n}_{i-1/2}(u_{i-1}^{n} ,u_i^{n},k)\big)\\
 &\qquad+\lambda\sgn(u_i^{n+1}-k) f(k)(\nu(c_{i+\frac{1}{2}}^{n})-\nu(c_{i-\frac{1}{2}}^{n}))\le 0, \label{apx:de}
 \end{align}
 where $\mathcal{G}^{n}_{i+1/2}(a,b,k)=\mathcal{F}_{i+1/2}^n(a \vee k,b \vee k)-\mathcal{F}_{i+1/2}^n(a \wedge k,b \wedge k)$ for all $i\in\Z, n\in\N$.
 \end{enumerate}
Furthermore, the finite volume approximations converge to the unique entropy solution $u$ of the IVP \eqref{IVP:eq}--\eqref{IVP:data}.
\end{theorem}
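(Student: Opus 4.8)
The plan is to prove the six uniform bounds \eqref{apx:positivity}--\eqref{apx:de} by exploiting two structural features of the scheme \eqref{scheme2}: the monotonicity of $H$ in its last three arguments (guaranteed by the CFL conditions), and the fact that the discrete nonlocal coefficients vary slowly across cells, $\abs{\nu(c^n_{i+1/2})-\nu(c^n_{i-1/2})}=O(\Delta x)$. Convergence then follows by compactness together with the uniqueness theorem already established. Throughout I write $\Delta^c_i:=\nu(c^n_{i+1/2})-\nu(c^n_{i-1/2})$ and repeatedly use the consistency relation $\mathcal{F}(a,u,u)=a\,f(u)$.

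I would start with the two cheapest estimates. Since $f(0)=0$, consistency gives $H(a,b,0,0,0)=0$, so monotonicity of $H$ and induction yield $u^{n+1}_i\ge0$, i.e.\ \eqref{apx:positivity}. Summing \eqref{scheme2} over $i\in\Z$ telescopes the fluxes, whence $\sum_i u^{n+1}_i=\sum_i u^n_i$; with positivity this is the exact mass conservation \eqref{apx:L1}. For \eqref{apx:Linf}, the quantitative input is $\abs{\Delta^c_i}\le C\Delta x$, which holds because $c^n_{i\pm1/2}$ are quadratures of the Lipschitz function $\mu*\beta(u^\Delta)$ at points $\Delta x$ apart, with $C$ depending on $\norma{\mu'}_{L^\infty},\norma{\beta'}_{L^\infty}$ and the conserved mass $\norma{u^0}_{L^1}$. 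Bounding $u^{n+1}_i$ by $H$ at the frozen maximal state $M^n:=\max_j u^n_j$ and using $H(a,b,M,M,M)=M-\lambda f(M)\Delta^c_i$ together with $\abs{f(M)}\le\abs{f}_{\lip(\R)}M$ gives $M^{n+1}\le(1+C'\Delta t)M^n$, and a discrete Gr\"onwall argument produces the exponential bound.

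The BV estimate \eqref{apx:bv} is the main obstacle. I would derive a one-step inequality $\TV(u^{n+1})\le\TV(u^n)+C\Delta t(1+\TV(u^n))$ and close it by discrete Gr\"onwall. Splitting the spatial increment $u^{n+1}_{i+1}-u^{n+1}_i$ into a piece with the coefficients $\nu(c^n_{i\pm1/2})$ frozen---handled exactly as in the local monotone (TVD) theory, so it does not increase the variation---and a piece measuring the variation of the coefficients, the analysis reduces to two summable bounds on $i\mapsto\nu(c^n_{i+1/2})$: the first-difference bound $\sum_i\abs{\Delta^c_i}\le C$ (using $\mu\in\operatorname{BV}$ and mass conservation) and the second-difference bound $\sum_i\abs{\Delta^c_{i+1}-\Delta^c_i}\le C\Delta x(1+\TV(u^n))$. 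The latter is where the full $C^2\cap\operatorname{BV}\cap W^{2,\infty}$ regularity of $\mu,\nu,\beta$ is consumed: one estimates the discrete second differences of the convolution through $\mu''$ and the variation of $\beta(u^n)$ after discrete summation by parts. Once \eqref{apx:bv} holds, \eqref{apx:time} is immediate, since \eqref{scheme2} gives $\Delta x\sum_i\abs{u^{n+1}_i-u^n_i}=\Delta t\sum_i\abs{\mathcal{F}^n_{i+1/2}-\mathcal{F}^n_{i-1/2}}$, and Lipschitz continuity of $\mathcal{F}$ bounds the right-hand side by $C\Delta t(1+\TV(u^n))$; summing over intermediate levels yields the estimate for general $m,n$.

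Finally, \eqref{apx:de} follows from the Crandall--Tartar argument adapted to the nonhomogeneous flux: applying $H$ to the states $u^n_j\vee k$ and $u^n_j\wedge k$ and using monotonicity, the constant $k$ now evolves to $k-\lambda f(k)\Delta^c_i$ instead of to $k$, which generates precisely the extra source term $\lambda\sgn(u^{n+1}_i-k)f(k)\,\Delta^c_i$. With \eqref{apx:positivity}--\eqref{apx:de} in hand, the uniform $L^\infty$, $L^1$ and $\operatorname{BV}$ bounds and the time-continuity estimate give, by a standard Kolmogorov--Riesz--Fr\'echet/Helly compactness argument, a subsequence of $u^\Delta$ converging in $C([0,T];L^1(\R))$ to some $u$; the discrete convolutions $c^n$ then converge to $\mu*\beta(u)$ by continuity of $\beta$ and $\mu$, and a Lax--Wendroff-type passage to the limit in \eqref{apx:de} shows that $u$ satisfies the entropy inequality \eqref{kruz}. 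Since such an entropy solution is unique, the whole family converges, which completes the proof.
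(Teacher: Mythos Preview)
Your proposal is correct and follows essentially the same route the paper indicates: the paper's own proof is a two-line pointer to \cite{aggarwal2015nonlocal,AmorimColomboTeixeira}, saying that the estimates follow from monotonicity and the incremental form with suitable modifications for the nonlocal coefficient, and your sketch fills in precisely those details (monotonicity for positivity and the entropy inequality, telescoping for mass conservation, the $O(\Delta x)$ variation of $\nu(c^n_{i+1/2})$ plus discrete Gr\"onwall for the $L^\infty$ and $\operatorname{BV}$ bounds, then compactness and Lax--Wendroff). There is no substantive difference in approach; you have simply written out what the paper delegates to the cited references.
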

\begin{proof}
The proof follows by invoking the monotonicity of the scheme and writing it in the incremental form.  The details can be worked out exactly on the similar lines of~\cite[Lem.\ 2.4--2.7]{aggarwal2015nonlocal} and~\cite[Lem.\ 2.2--2.8]{AmorimColomboTeixeira} with proper modification in the estimations on the nonlocal coefficient. 
\end{proof}
The above theorem implies that the entropy solution satisfies the following regularity estimates. 
\begin{corollary}[Regularity of the entropy solution]Assume that (\textbf{H1})--(\textbf{H3}) hold.
For $0 <t \leq T$ and $u_0\in (L^1\cap BV) (\R),$ the entropy solution $u$ of the IVP \eqref{IVP:eq}--\eqref{IVP:data} satisfies the following:
\begin{align*}
\norma{u(t,\dott)}_{L^{\infty}(\R)}&\leq \exp(\mathcal{L}_1T)\norma{u_0}_{L^{\infty}(\R)}\\
\norma{u(t,\dott)}_{L^1(\R)}&=\norma{u_0}_{L^1(\R)}\\
\TV(u(t,\dott)) &\leq \exp(\mathcal{L}_2T)(\TV(u_0) +\mathcal{L}_2)\\
\norma{u(t_2,\dott)-u(t_1,\dott)}_{L^1(\R)} &\leq \mathcal{L}_3\abs{t_2-t_1}, \text{ where } 0\leq t_1,t_2 \leq T.
\end{align*}
\end{corollary}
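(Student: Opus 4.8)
The plan is to derive each of the four estimates by passing to the limit $\Delta x \to 0$ in the corresponding uniform discrete bound of the preceding theorem, using that the approximations $u^\Delta$ converge to the entropy solution $u$ and that the constants $\mathcal{L}_1,\mathcal{L}_2,\mathcal{L}_3$ are independent of the discretization. First I would record the elementary facts about the piecewise-constant cell average $u_0^\Delta$ defined in \eqref{apx:data}: since averaging over cells neither raises the essential supremum nor the total variation, and since $u_0^\Delta \to u_0$ in $L^1(\R)$, one has $\norma{u_0^\Delta}_{L^\infty(\R)}\le\norma{u_0}_{L^\infty(\R)}$, $\TV(u_0^\Delta)\le\TV(u_0)$, and $\norma{u_0^\Delta}_{L^1(\R)}\to\norma{u_0}_{L^1(\R)}$. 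Hence the right-hand sides of \eqref{apx:Linf} and \eqref{apx:bv}, expressed through the discrete data, are themselves controlled by the stated continuous quantities uniformly in $\Delta x$, and $\Delta t = \lambda\,\Delta x \to 0$ as $\Delta x\to 0$ by the CFL condition.

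Next I would fix $t\in(0,T]$. Because $u^\Delta(t,\cdot)=u^n$ on $[t^n,t^{n+1})$ by \eqref{def_u_De} and the approximations converge to $u\in C([0,T];L^1(\R))$, we have $u^\Delta(t,\cdot)\to u(t,\cdot)$ in $L^1(\R)$ for each such $t$; passing to a subsequence we may assume in addition convergence a.e. The $L^1$ identity then follows at once from this strong convergence together with \eqref{apx:L1} and $\norma{u_0^\Delta}_{L^1}\to\norma{u_0}_{L^1}$. For the $L^\infty$ bound I would invoke the lower semicontinuity of the essential supremum under a.e.\ convergence, so that \eqref{apx:Linf} gives $\norma{u(t,\cdot)}_{L^\infty(\R)}\le\liminf\norma{u^\Delta(t,\cdot)}_{L^\infty(\R)}\le\exp(\mathcal{L}_1T)\norma{u_0}_{L^\infty(\R)}$. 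Similarly, since the total variation is lower semicontinuous with respect to $L^1_{\loc}$ convergence, \eqref{apx:bv} yields $\TV(u(t,\cdot))\le\liminf\TV(u^\Delta(t,\cdot))\le\exp(\mathcal{L}_2T)(\TV(u_0)+\mathcal{L}_2)$.

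For the time-Lipschitz estimate I would first upgrade \eqref{apx:time} from grid times to arbitrary times: as $u^\Delta$ is constant in $t$ on each interval $[t^n,t^{n+1})$, for $t_1,t_2\in[0,T]$ lying in the cells indexed by $m$ and $n$ one gets $\norma{u^\Delta(t_1,\cdot)-u^\Delta(t_2,\cdot)}_{L^1(\R)}=\Delta x\sum_i\abs{u_i^m-u_i^n}\le\mathcal{L}_3\abs{m-n}\Delta t\le\mathcal{L}_3(\abs{t_1-t_2}+\Delta t)$. Letting $\Delta x\to 0$ (hence $\Delta t\to 0$) and using $u^\Delta(t_i,\cdot)\to u(t_i,\cdot)$ in $L^1(\R)$ produces $\norma{u(t_2,\cdot)-u(t_1,\cdot)}_{L^1(\R)}\le\mathcal{L}_3\abs{t_2-t_1}$.

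The only genuinely delicate point is matching each limit to a topology compatible with the functional being estimated: the $L^1$ identity requires strong $L^1$ convergence, which is available directly, whereas the $L^\infty$ and $\TV$ bounds rely on lower semicontinuity and therefore survive only as inequalities, obtained along an a.e.-convergent subsequence. Since the uniqueness theorem guarantees that the whole sequence $u^\Delta$ has the single limit $u$, no subsequence ambiguity remains in the final statement, and the constants carry over unchanged because they are independent of $\Delta t$.
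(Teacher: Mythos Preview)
Your argument is correct and is precisely the intended route: the paper states this corollary without proof, immediately after Theorem~\ref{thm:2.2}, as a direct consequence of passing the uniform discrete bounds \eqref{apx:Linf}--\eqref{apx:time} to the limit along the convergent approximations. Your write-up simply makes explicit the lower-semicontinuity and strong-$L^1$ arguments that the paper leaves implicit.
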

 
\section{Error estimate}\label{EE}

 We define $\Phi: \overline{Q}_T^2 \rightarrow \R$ by  
 \begin{equation*}
\Phi(t,x,s,y):=\Phi^{\epsilon,\epsilon_0}(t,x,s,y)=\omega_{\epsilon}(x-y)\omega_{{\epsilon}_0}(t-s),
\end{equation*}
where $\omega_a(x)=\frac{1}{a}\omega\big(\frac{x}{a}\big),$ $a>0$ and $\omega$ is a standard symmetric mollifier with $\operatorname{supp} (\omega) \subseteq [-1,1].$ Furthermore, we assume that $\int_\R \omega_a(x) \d x =1$ and $\int_\R\abs{\omega'_a(x)} \d x =\frac{1}{a}.$ Now, it is straight forward to see that $\Phi$ is symmetric and
$\Phi_x=\omega'_{{\epsilon}}(x-y)\omega_{\epsilon_0}(t-s)=-\Phi_y,  \Phi_t=\omega_{\epsilon}(x-y)\omega'_{{\epsilon}_0}(t-s)=-\Phi_s$.
Further, define the following functions.
\begin{definition}
 \begin{align*}
 G(a,b)&:=\sgn (a-b) (f(a)-f(b)),\\
    \Lambda_T(u,\phi,k)&:= \int_{Q_T}\Big( |u-k|\phi_{t}+\mathcal{U}(t,x)G(u,k)\phi_{x}-  \sgn (u-k) f(k)\mathcal{U}_x(t,x)\phi\Big) \d t \d x\\
    &\quad -\int_{\R}|u(T,x)-k|\phi(T,x)\d x+\int_{\R}|u_0(x)-k|\phi(0,x)\d x, \quad \text{where }\phi\in C_c^{\infty}(\overline{Q}_T),\\ 
    \Lambda_{\epsilon,\epsilon_0}(u,v)&:=\int_{Q_T}\Lambda_T(u,\Phi(\dott,\dott,s,y), v(s,y))\d y \d s.
  \end{align*}
  \end{definition}

We now state and prove the Kuznetsov-type lemma for nonlocal conservation laws.
\begin{lemma}\label{lemma:kuz}
Let $u$ be the entropy solution of \eqref{IVP:eq}--\eqref{IVP:data} and $v \in K.$ Then,
\begin{align*}
\norma{u(T,\dott)-v(T,\dott)}_{L^1(\R)}&\le\mathcal{K}\left(-\Lambda_{\epsilon,\epsilon_0}(v,u)+ \norma{u_0-v_0}_{L^1(\R)}+\gamma(v,{\epsilon_0})+\epsilon+\epsilon_0 \right), 
\end{align*}
  where $\mathcal{K}$ is constant that depends on
  \begin{align*}
  \mathcal{K}=\mathcal{K}(f,\mu, \eta, \nu, \beta, \norma{u}_{L^1(Q_T)},\norma{v}_{L^1(Q_T)},|u|_{L^\infty_t \operatorname{BV}_x},|v|_{L^\infty_t \operatorname{BV}_x},|u|_{L^\infty_t \operatorname{BV}_x},T)
  \end{align*}
and is independent of $\epsilon,\epsilon_0$.
\end{lemma}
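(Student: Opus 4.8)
The plan is to prove the estimate by Kruzhkov's doubling-of-variables method, adapted to the nonlocal coefficient $\mathcal{U}$ and run through the symmetric kernel $\Phi$. First I would upgrade the entropy inequality of Definition~\ref{ent}: since $u\in C([0,T];L^1(\R))$, a test function with nonzero terminal trace can be approximated by ones vanishing near $t=T$, which yields $\Lambda_T(u,\phi,k)\ge 0$ for every nonnegative $\phi\in C_c^\infty(\overline{Q}_T)$ and every $k\in\R$. Taking $\phi=\Phi(\cdot,\cdot,s,y)$ and $k=v(s,y)$ and integrating in $(s,y)$ gives $\Lambda_{\epsilon,\epsilon_0}(u,v)\ge 0$. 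Hence it suffices to show that $-\big(\Lambda_{\epsilon,\epsilon_0}(u,v)+\Lambda_{\epsilon,\epsilon_0}(v,u)\big)$ controls $\norma{u(T,\cdot)-v(T,\cdot)}_{L^1(\R)}-\norma{u_0-v_0}_{L^1(\R)}$ up to admissible errors, after which discarding the nonnegative term $\Lambda_{\epsilon,\epsilon_0}(u,v)$ leaves only $-\Lambda_{\epsilon,\epsilon_0}(v,u)$ on the right.

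Next I would relabel $(t,x)\leftrightarrow(s,y)$ in the second functional and use $\Phi_t=-\Phi_s$, $\Phi_x=-\Phi_y$, the symmetry of $\Phi$, and $G(a,b)=G(b,a)$. As in the local theory the two bulk time-derivative integrands $|u-v|\Phi_t$ cancel exactly. The terminal and initial blocks are then converted into genuine $L^1$ distances: since $\int_\R\omega_a=1$ and $\omega_{\epsilon_0}(T-s)$, $\omega_\epsilon(x-y)$ concentrate at $s=T$, $y=x$, the terminal block equals $-\norma{u(T,\cdot)-v(T,\cdot)}_{L^1(\R)}$ up to an error controlled by the spatial oscillation of $v(T,\cdot)$, hence by $\epsilon\,|v|_{L^\infty_t \operatorname{BV}_x}$, and by the temporal modulus $\gamma(v,\epsilon_0)$, while the Lipschitz-in-time bound $\gamma(u,\epsilon_0)\le\mathcal{L}_3\epsilon_0$ from the Corollary costs only $O(\epsilon_0)$; the initial block similarly produces $+\norma{u_0-v_0}_{L^1(\R)}$ up to errors of the same type.

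The genuinely new contributions are the flux and source integrals, which no longer cancel by symmetry as in the homogeneous case. I would first record the regularity of the coefficient $\mathcal{U}=\nu(\mu*\beta(u))$ of the entropy solution: because $\mu,\nu,\beta\in C^2\cap W^{2,\infty}$ and $u$ is bounded with $\sup_t\TV(u(t,\cdot))<\infty$ and Lipschitz in time, both $\mathcal{U}$ and its spatial derivative $\mathcal{U}_x$ are bounded and Lipschitz in $x$ and in $t$, with constants depending only on the listed data. After the relabelling the surviving integrand is $[\mathcal{U}(t,x)-\mathcal{U}(s,y)]\,G(u(t,x),v(s,y))\,\Phi_x$ together with the non-conservative term $\sgn(u-v)\big(f(u(t,x))\,\mathcal{U}_x(s,y)-f(v(s,y))\,\mathcal{U}_x(t,x)\big)\Phi$. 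The first piece carries the singular factor $\Phi_x$ of order $1/\epsilon$, and since $u$ is only $\operatorname{BV}$ one cannot simply integrate this derivative onto $f(u)$.

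The hard part will be to show that these two pieces cancel to leading order. The remedy is to use $\Phi_x=-\Phi_y$ and integrate the flux integral by parts in $y$: the derivative then falls either on the smooth factor $\mathcal{U}(s,y)$, producing a bounded $\mathcal{U}_y$ against the integrable kernel $\omega_\epsilon$, or on $v(s,y)$ inside $G$, producing the measure $\partial_y v$ of bounded mass $|v|_{L^\infty_t \operatorname{BV}_x}$; in the latter case the prefactor $[\mathcal{U}(t,x)-\mathcal{U}(s,y)]$ is of size $O(\epsilon+\epsilon_0)$ by the Lipschitz regularity of $\mathcal{U}$ on the support of $\Phi$, so that contribution is negligible. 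The principal surviving part $-G(u,v)\,\mathcal{U}_x\,\Phi$ then matches, after the Lipschitz replacement $\mathcal{U}_x(s,y)\to\mathcal{U}_x(t,x)$, the principal part $+G(u,v)\,\mathcal{U}_x\,\Phi$ of the non-conservative term, and the two cancel up to $O(\epsilon+\epsilon_0)$. Collecting the boundary identities of the second step and the $O(\epsilon+\epsilon_0+\gamma(v,\epsilon_0))$ bound for the flux and source, and discarding $\Lambda_{\epsilon,\epsilon_0}(u,v)\ge 0$, yields the asserted inequality, with $\mathcal{K}$ depending on $f,\mu,\nu,\beta,T$ and on the stated norms of $u$ and $v$.
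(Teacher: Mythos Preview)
Your decomposition goes wrong at the point where you identify the ``surviving integrand''. The functional $\Lambda_T$ is defined so that its nonlocal coefficient is built from its \emph{first} argument; hence $\Lambda_{\epsilon,\epsilon_0}(v,u)$ carries $\mathcal{V}(s,y)=\nu(\mu*\beta(v(s)))(y)$, not $\mathcal{U}(s,y)$. After the relabelling and the cancellation of the time terms, the flux piece is $[\mathcal{U}(t,x)-\mathcal{V}(s,y)]\,G(u,v)\,\Phi_x$ and the non-conservative piece is $\sgn(u-v)\big(f(u)\,\mathcal{V}_y(s,y)-f(v)\,\mathcal{U}_x(t,x)\big)\Phi$. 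Your claim that the prefactor is $O(\epsilon+\epsilon_0)$ therefore fails: $\mathcal{U}(t,x)-\mathcal{V}(s,y)$ is \emph{not} the oscillation of a single Lipschitz function, and in fact
\[
|\mathcal{U}(t,x)-\mathcal{V}(s,y)|\;\lesssim\;|x-y|+\|u(t,\cdot)-v(s,\cdot)\|_{L^1(\R)},
\]
with an analogous bound for $|\mathcal{U}_x-\mathcal{V}_y|$. The second contribution cannot be absorbed into $O(\epsilon+\epsilon_0)$.

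This is exactly what the paper's proof handles: after the integration by parts (there in $x$, putting the derivative on $G$ via the BV chain rule $|G_x(u,v)|\le |f|_{\lip}|u_x|$), the resulting bounds contain a term $\int_0^T\|u(s,\cdot)-v(s,\cdot)\|_{L^1(\R)}\,\d s$, and the inequality is closed only by Gronwall. Your outline omits Gronwall entirely, and with the genuine coefficient $\mathcal{V}$ in place the cancellation you describe does not occur. Once you correct the coefficient and carry the $\|u-v\|_{L^1}$ term through, either integration-by-parts variable ($x$ or $y$) leads to essentially the same argument as the paper's, terminating in a Gronwall step.
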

 
 \begin{proof}
Consider the sum $\Lambda_{\epsilon,\epsilon_0}(u,v)+\Lambda_{\epsilon,\epsilon_0}(v,u)$:
\begin{align*}
\Lambda_{\epsilon,\epsilon_0}(u,v)&+\Lambda_{\epsilon,\epsilon_0}(v,u) \\
&\quad=\int_{Q^2_T}\Big( |u(t,x)-v(s,y)|\Phi_{t} +\mathcal{U}(t,x)G(u,v)\Phi_{x} \Big) \d t \d x \d y \d s \\&\qquad- \int_{Q^2_T} f(v) \sgn (u(t,x)-v(s,y)) \mathcal{U}_x(t,x)\Phi \d t \d x \d y \d s\\
    &\qquad-\int_{Q_T}\int_{\R}|u(T,x)-v(s,y)|\Phi(T,x,s,y)\d x \d y \d s \\&\qquad+\int_{Q_T}\int_{\R}|u_0(x)-v(s,y)|\Phi(0,x,s,y)\d x \d y \d s\\
   &\qquad+ \int_{Q^2_T}\Big( |u(t,x)-v(s,y)|\Phi_{s} +\mathcal{V}(s,y)G(u,v)\Phi_{y}\Big) \d t \d x \d y \d s \\ &\qquad - \int_{Q^2_T} f(u) \sgn (v(s,y)-u(t,x)) \mathcal{V}_y(s,y)\Phi \d t \d x \d y \d s\\
    &\qquad-\int_{Q_T}\int_{\R}|v(T,y)-u(t,x)|\Phi(t,x,T,y)\d x \d y \d s \\&\qquad+\int_{Q_T}\int_{\R}|v_0(y)-u(t,x)|\Phi(t,x,0,y)\d x \d y \d s,
 \end{align*}    
where $\mathcal{V}(s,y)= \nu(\mu*\beta(v(s))(y)).$ 
Since $\Phi_{s}=-\Phi_{t},\Phi_{y}=-\Phi_{x}, $ we have 
\begin{align*}
\Lambda_{\epsilon,\epsilon_0}&(u,v)+\displaystyle\Lambda_{\epsilon,\epsilon_0}(v,u)\\
&\quad=\int_{Q^2_T}G(u,v)\Phi_{x}(\mathcal{U}(t,x)-\mathcal{V}(s,y)) \d t \d x \d y \d s\\
&\qquad-\int_{Q^2_T}\sgn (u(t,x)-v(s,y)) \Big(f(v)  \mathcal{U}_x(t,x)-f(u)  \mathcal{V}_y(s,y)\Big)\Phi\d t \d x \d y \d s\\
    &\qquad-\int_{Q_T}\int_{\R}|u(T,x)-v(s,y)|\Phi(T,x,s,y)\d x \d y \d s\\&\qquad+\int_{Q_T}\int_{\R}|u_0(x)-v(s,y)|\Phi(0,x,s,y)\d x \d y \d s\\ 
    &\qquad-\int_{Q_T}\int_{\R}|v(T,y)-u(t,x)|\Phi(t,x,T,y)\d y \d t \d x\\ &\qquad+\int_{Q_T}\int_{\R}|v_0(y)-u(t,x)|\Phi(t,x,0,y)\d y \d t \d x.
    \end{align*}
 In other words, we have
 \begin{equation*}
 \Lambda_{\epsilon,\epsilon_0}(u,v)=
 -\Lambda_{\epsilon,\epsilon_0}(v,u)+I_{\Phi'}+I_{\Phi}+I_0-I_T,
 \end{equation*}  
 with 
 \begin{align*}
I_{\Phi'}&=\int_{Q^2_T}G(u,v)\Phi_{x}(\mathcal{U}(t,x)-\mathcal{V}(s,y)) \d t \d x \d y \d s,\\[2mm]
I_{\Phi}&=-\int_{Q^2_T}\sgn (u(t,x)-v(s,y)) \Big(f(v)  \mathcal{U}_x(t,x)-f(u)  \mathcal{V}_y(s,y)\Big)\Phi\d x dt \d y \d s,\\
I_T&=\int_{Q_T}\int_{\R}\Big(|u(T,x)-v(t,y)|+|v(T,y)-u(t,x)|\Big)\Phi(t,x,T,y)\d y \d t \d x,\\[2mm]
I_0&=\int_{Q_T}\int_{\R}\Big(|u_0(x)-v(t,y)|+|v_0(y)-u(t,x)|\Big)\Phi(t,x,0,y)\d y \d x \d t.
 \end{align*}
  Since $u$ is the entropy solution of \eqref{IVP:eq}--\eqref{IVP:data}, we have that $\displaystyle\Lambda_{\epsilon,\epsilon_0}(u,v)\ge 0, $ and hence,
  \begin{align}\label{kuz}
I_T&\le-\displaystyle\Lambda_{\epsilon,\epsilon_0}(v,u) + I_{\Phi'}+I_{\Phi}+I_0.
 \end{align}
The terms $I_0$ and $I_T$ appear in the local case as well so they can be estimated on the similar lines of \cite{holden2015front,GTV_NM} to get:
\begin{align}\label{1c}
   I_T&\ge \norma{u(T,\dott)-v(T,\dott)}_{L^1(\R)}-\mathcal{K}_1(\epsilon+\epsilon_0+\gamma(v,\epsilon_0)),\\
   \label{1d}
   I_0&\le \norma{u_0-v_0}_{L^1(\R)}+\mathcal{K}_1(\epsilon+\epsilon_0+\gamma(v,\epsilon_0)),
\end{align}
where $\mathcal{K}_1=\mathcal{K}_1(\abs{u}_{L^\infty_t \operatorname{BV}_x},\abs{v}_{L^\infty_t \operatorname{BV}_x}, |u|_{L^\infty_t \operatorname{BV}_x} )$.
 Now, we estimate the other terms one by one.
 Using integration by parts $I_{\Phi'}$ can be written as,
 \begin{equation*}
I_{\Phi'}=-\int_{Q^2_T}\Phi\big[G_x(u,v)(\mathcal{U}(t,x)-\mathcal{V}(s,y))+G(u,v)\mathcal{U}_x(t,x) \big]\d t \d x \d y \d s.
\end{equation*}
Consequently,
\begin{align*}
I_{\Phi'}&+I_{\Phi}\\&=-\int_{Q^2_T}\Phi(G_x(u,v)(\mathcal{U}(t,x)-\mathcal{V}(s,y))+\sgn(u-v)(f(u)-f(v))\mathcal{U}_x(t,x) )\d t \d x \d y \d s\\[2mm]
&\quad-\int_{Q^2_T}\sgn (u(t,x)-v(s,y)) \Big(f(v)  \mathcal{U}_x(t,x)-f(u)  \mathcal{V}_y(s,y)\Big)\Phi\d x dt \d y \d s\\[2mm]
  &=-\int_{Q^2_T}\Phi(G_x(u,v)\big[(\mathcal{U}(t,x)-\mathcal{V}(s,y))+\sgn(u-v)f(u)(\mathcal{U}_x(t,x)-\mathcal{V}_y(s,y))\big]\d t \d x \d y \d s\\
  &:= I_{\mathcal{U}}+I_{\mathcal{U}_x}. 
\end{align*}
Consider the term
\begin{equation*}
I_{\mathcal{U}}=\int_{Q^2_T}\Phi\big[G_x(u,v)(\mathcal{V}(s,y)-\mathcal{U}(t,x))\big]\d{x}\d{t}  \d{y}\d{s}.
\end{equation*}
Since $|G_x(u,v)|\le \abs{f}_{\lip(\R)}\abs{u_x}$ (in the sense of measures, see~\cite[Lem.\ A2.1]{BK98} for details), we have, 
\begin{equation*}
 I_{\mathcal{U}}\le \abs{f}_{\lip(\R)}\int_{Q^2_T}\Phi\abs{u_x}\abs{\mathcal{V}(s,y)-\mathcal{U}(t,x)}\d{x} \d{t}\d{y} \d{s}.
\end{equation*}
Note that the term 
\begin{align*}
&|\mathcal{V}(s,y)-\mathcal{U}(t,x)|  \\
&\le\nn\abs{\mathcal{V}(s,y)-\mathcal{V}(s,x)}+\abs{\mathcal{V}(s,x)-\mathcal{U}(t,x)}\\
&=|\nu(\mu*\beta(v(s)))(y)-\nu(\mu*\beta(v(s)))(x)|+|\nu(\mu*\beta(v(s)))(x)-\nu(\mu*\beta(u(t)))(x)|\\
&\le\abs{\nu}_{\lip(\R)}\abs{(\mu*\beta(v(s)))(y)-(\mu*\beta(v(s)))(x)}\\&\qquad+\abs{\nu}_{\lip(\R)}\abs{(\mu*\beta(v(s)))(x)-(\mu*\beta(u(t)))(x)}\\
&=\abs{\nu}_{\lip(\R)}\Big(\abs{\int_{\R}\beta(v(s,z))(\mu(y-z)-\mu(x-z))\d z}\\
&\qquad+
\abs{\int_{\R}(\beta(v(s,z))-\beta(u(t,z)))\mu(x-z)\d z}\Big)\\
&=\abs{\nu}_{\lip(\R)}\abs{\beta}_{\lip(\R)}\abs{\mu}_{\lip(\R)}\norma{v(s,\dott)}_{L^1(\R)}\abs{y-x}\\&\qquad+\abs{\nu}_{\lip(\R)}\abs{\beta}_{\lip(\R)} \norma{\mu}_{L^{\infty}(\R)}
\norma{v(s,\dott)-u(t,\dott)}_{L^1(\R)}. 
\end{align*}
Consequently we get:
\begin{align*}
I_{\mathcal{U}}&\leq\abs{f}_{\lip(\R)}\abs{\nu}_{\lip(\R)}\abs{\beta}_{\lip(\R)} \int_{Q^2_T}\Phi\abs{u_x}\Big(\abs{\mu}_{\lip(\R)}\norma{v(s,\dott)}_{L^1(\R)}\abs{y-x}\\
&\quad+
\norma{\mu}_{L^{\infty}(\R)}\norma{v(s,\dott)-u(t,\dott)}_{L^1(\R)}\Big)\d{x} \d{y}\d{t} \d{s}
\\
&\leq\abs{f}_{\lip(\R)}\abs{\nu}_{\lip(\R)}\abs{\beta}_{\lip(\R)}\abs{\mu}_{\lip(\R)}\int_{Q^2_T}\Phi\abs{u_x}\norma{v(s,\dott)}_{L^1(\R)}\abs{y-x}\d{x} \d{y}\d{t} \d{s}\\
&\quad+\abs{f}_{\lip(\R)}\abs{\nu}_{\lip(\R)}\abs{\beta}_{\lip(\R)}\norma{\mu}_{L^{\infty}(\R)}\int_{Q^2_T}\Phi\abs{u_x}
\norma{u(s,\dott)-u(t,\dott)}_{L^1(\R)}\d{x} \d{y}\d{t} \d{s}\\
&\quad+\abs{f}_{\lip(\R)}\abs{\nu}_{\lip(\R)}\abs{\beta}_{\lip(\R)} \norma{\mu}_{L^{\infty}(\R)}\int_{Q^2_T}\Phi\abs{u_x}
\norma{v(s,\dott)-u(s,\dott)}_{L^1(\R)}\d{x} \d{y}\d{t} \d{s}\\
&:=I^1_{\mathcal{U}}+I^2_{\mathcal{U}}+I^3_{\mathcal{U}},
\end{align*}
where $I^1_{\mathcal{U}},I^2_{\mathcal{U}}$, and $I^3_{\mathcal{U}}$ satisfy the following estimates.
 \begin{align*}
I^1_{\mathcal{U}}&=\abs{f}_{\lip(\R)}\abs{\nu}_{\lip(\R)}\abs{\beta}_{\lip(\R)}\abs{\mu}_{\lip(\R)}\\&\qquad \times \int_{Q^2_T}\omega_{\epsilon}(x-y)\omega_{{\epsilon}_0}(t-s)\abs{u_x}\norma{v(s,\dott)}_{L^1(\R)}\abs{y-x}\d{x}
\d{y}\d{t}\d{s}\\
&\leq\abs{f}_{\lip(\R)}\abs{\nu}_{\lip(\R)}\abs{\beta}_{\lip(\R)}\abs{\mu}_{\lip(\R)}\int_{Q^2_T}\omega_{\epsilon}(x-y)\omega_{{\epsilon}_0}(t-s)\abs{u_x}\norma{v(s,\dott)}_{L^1(\R)}\epsilon\d{x}
\d{y}\d{t}\d{s}\\
&\leq\abs{f}_{\lip(\R)}\abs{\nu}_{\lip(\R)}\abs{\beta}_{\lip(\R)}\abs{\mu}_{\lip(\R)}\abs{u}_{L^\infty_t \operatorname{BV}_x}\norma{v}_{L^1(Q_T)}\epsilon, 
\\[3mm]
I^2_{\mathcal{U}}&=\abs{f}_{\lip(\R)}\abs{\nu}_{\lip(\R)}\abs{\beta}_{\lip(\R)}\norma{\mu}_{L^{\infty}(\R)}\\&\qquad \times \int_{Q^2_T}\omega_{\epsilon}(x-y)\omega_{{\epsilon}_0}(t-s)\abs{u_x}
\norma{u(s,\dott)-u(t,\dott)}_{L^1(\R)}\d{x} \d{y}\d{t}\d{s}\\
&\le \abs{f}_{\lip(\R)}\abs{\nu}_{\lip(\R)}\abs{\beta}_{\lip(\R)} \abs{u}_{L^\infty_t \operatorname{BV}_x}\norma{\mu}_{L^{\infty}(\R)}\\&\qquad\times\int_{Q^2_T}\omega_{\epsilon}(x-y)\omega_{{\epsilon}_0}(t-s)\abs{u_x}
|t-s|\d{x} \d{y}\d{t}\d{s}\\
&\leq \abs{f}_{\lip(\R)}\abs{\nu}_{\lip(\R)}\abs{\beta}_{\lip(\R)}\abs{u}_{L^\infty_t \operatorname{BV}_x} \abs{u}_{L^\infty_t \operatorname{BV}_x}\norma{\mu}_{L^{\infty}(\R)} \int_0^T\int_0^T\omega_{{\epsilon}_0}(t-s)
{\epsilon_0}\d{t}\d{s}\\
&=\abs{f}_{\lip(\R)}\abs{\nu}_{\lip(\R)}\abs{\beta}_{\lip(\R)}\norma{\mu}_{L^{\infty}(\R)}\abs{u}_{L^\infty_t \operatorname{BV}_x} \abs{u}_{L^\infty_t \operatorname{BV}_x} T
{\epsilon_0},
\\[3mm]
I^3_{\mathcal{U}}
&=\abs{f}_{\lip(\R)}\abs{\nu}_{\lip(\R)}\abs{\beta}_{\lip(\R)}\abs{u}_{L^\infty_t \operatorname{BV}_x}\norma{\mu}_{L^{\infty}(\R)}\int_{0}^T\int_{0}^T\omega_{{\epsilon}_0}(t-s)
 \norma{v(s,\dott)-u(s,\dott)}_{L^1(\R)}\d{t} \d{s}
\\
&\leq\abs{f}_{\lip(\R)}\abs{\nu}_{\lip(\R)}\abs{\beta}_{\lip(\R)}\abs{u}_{L^\infty_t \operatorname{BV}_x}\norma{\mu}_{L^{\infty}(\R)}\int_{0}^T
\norma{v(s,\dott)-u(s,\dott)}_{L^1(\R)}\d{s}.
\end{align*}
Collectively,
we have
\begin{align}
 I_{\mathcal{U}}&\le \abs{f}_{\lip(\R)}\abs{\nu}_{\lip(\R)}\abs{\beta}_{\lip(\R)}\abs{u}_{L^\infty_t \operatorname{BV}_x}\nonumber\\
 &\quad \times\left(\abs{\mu}_{\lip(\R)}\norma{v}_{L^1(Q_T)}\epsilon+\norma{\mu}_{L^{\infty}(\R)}(\abs{u}_{L^\infty_t \operatorname{BV}_x}T
\epsilon_0+\int_{0}^T
\norma{v(s,\dott)-u(s,\dott)}_{L^1(\R)}\d{s})\right)\nonumber\\
&\le  \mathcal{K}_2(\epsilon+\epsilon_0)+\mathcal{K}_3\int_{0}^T
\norma{v(s,\dott)-u(s,\dott)}_{L^1(\R)}\d{s},\label{IUf}
\end{align}
for some appropriate constants $\mathcal{K}_2$ and $\mathcal{K}_3$.
Now, we consider,
\begin{align*}
 I_{\mathcal{U}_x}&=\int_{Q^2_T}\Phi\sgn(u-v)f(u)(\mathcal{U}_x(t,x)-\mathcal{V}_y(s,y))\d t \d x \d y \d s\\
 &\le \abs{f}_{\lip(\R)}\int_{Q^2_T}\omega_{\epsilon}(x-y)\omega_{{\epsilon}_0}(t-s)\abs{u}\abs{\mathcal{U}_x(t,x)-\mathcal{V}_y(s,y)}\d t \d x \d y \d s.
 \end{align*}
 Note that 
\begin{equation*}
|\mathcal{V}_y(s,y)-\mathcal{U}_x(t,x)|\le\nn\abs{\mathcal{V}_y(s,y)-\mathcal{V}_x(s,x)}+\abs{\mathcal{V}_x(s,x)-\mathcal{U}_x(t,x)}.
\end{equation*}
Now, adding and subtracting $\nu'((\mu*\beta(v(s)))(y))(\mu'*\beta(v(s)))(x)$ to $\abs{\mathcal{V}_y(s,y)-\mathcal{V}_x(s,x)}$, we get
\begin{align*}
    \abs{\mathcal{V}_y(s,y)-\mathcal{V}_x(s,x)}&\le 
   \abs{ \nu'((\mu*\beta(v))(s,y))((\mu'*\beta(v))(s,y)-(\mu'*\beta(v))(s,x))}\\
    &\quad+\abs{(\nu'((\mu*\beta(v(s)))(y))-\nu'((\mu*\beta(v(s)))(x)))(\mu'*\beta(v(s)))(x)}.
\end{align*}

Furthermore, 
\begin{align*}
    |\mu'*\beta(v(s))(y)-\mu'*\beta(v(s))(x)|&\le \abs{\beta}_{\lip(\R)}\abs{\mu'}_{\lip(\R)}\norma{v(s,\dott)}_{L^1(\R)}\abs{y-x},\\
    |\nu'(\mu*\beta(v(s)))(y)-\nu'(\mu*\beta(v(s)))(x)|&\le\abs{\nu'}_{\lip(\R)}\abs{\beta}_{\lip(\R)}\abs{\mu}_{\lip(\R)}\norma{v(s,\dott)}_{L^1(\R)}\abs{y-x},
\end{align*}
which implies that
\begin{align*}
&\abs{\mathcal{V}_y(s,y)-\mathcal{V}_x(s,x)}\\&\quad\le     \abs{\nu}_{\lip(\R)}\abs{\beta}_{\lip(\R)}\abs{\mu'}_{\lip(\R)}\norma{v(s,\dott)}_{L^1(\R)}\abs{y-x}\\    &\qquad+\abs{\nu'}_{\lip(\R)}\abs{\beta}_{\lip(\R)}\abs{\mu}_{\lip(\R)}\norma{v(s,\dott)}_{L^1(\R)}\abs{y-x}\norma{\mu'}_{L^\infty(\R)}\abs{\beta}_{\lip(\R)}\norma{v(s,\dott)}_{L^1(\R)}.
\end{align*}
 Adding and subtracting $\nu'((\mu*\beta((s)))(x))(\mu'*\beta(u(t)))(x)$ to 
 $\abs{\mathcal{V}_x(s,x)-\mathcal{U}_x(t,x)}$, we ge
 \begin{align*}
   \abs{\mathcal{V}_x(s,x)-\mathcal{U}_x(t,x)}&\le \abs{\nu'((\mu*\beta(v(s)))(x))((\mu'*\beta(v(s)))(x)-(\mu'*\beta(u(t)))(x))}\\
     &\quad+\abs{(\nu'((\mu*\beta(v(s)))(x))-\nu'((\mu*\beta(u(t)))(x)))(\mu'*\beta(u(t)))(x)}.
 \end{align*}
Moreover, 
\begin{align*}
  |(\mu'*\beta(v))(s,x)-(\mu'*\beta(u))(t,x)|&\le   \norma{\mu'}_{L^{\infty}(\R)}
 \norma{v(s,\dott)-u(t,\dott)}_{L^1(\R)},\\
 |\nu'(\mu*\beta(v))(s,x)-\nu'(\mu*\beta(u))(t,x)|&\le \abs{\nu'}_{\lip(\R)}\abs{\beta}_{\lip(\R)} \norma{\mu}_{L^{\infty}(\R)} \norma{v(s,\dott)-u(t,\dott)}_{L^1(\R)}.
\end{align*}
Collecting all terms, we have
\begin{align*}
   & \abs{\mathcal{V}_x(s,x)-\mathcal{U}_x(t,x)}\\&\le \norma{\nu'}_{L^\infty(\R)}\norma{\mu'}_{L^{\infty}(\R)}
\norma{v(s,\dott)-u(t,\dott)}_{L^1(\R)}\\
&\quad+\abs{\nu'}_{\lip(\R)}\abs{\beta}_{\lip(\R)} \norma{\mu}_{L^{\infty}(\R)}
\norma{v(s,\dott)-u(t,\dott)}_{L^1(\R)}\norma{\mu'}_{L^\infty(\R)}\abs{\beta}_{\lip(\R)}\norma{v(s,\dott)}_{L^1(\R)}.
\end{align*}
Now, it can be observed that $I_{\mathcal{U}_x}$ can be handled like \eqref{IUf}, leading to the following estimate:
\begin{equation}
 I_{\mathcal{U}_x}\le
\mathcal{K}_4(\epsilon+\epsilon_0)+\mathcal{K}_5\int_{0}^T
\norma{v(s,\dott)-u(s,\dott)}_{L^1(\R)}\d{s},
\end{equation}
for some appropriate constants $\mathcal{K}_4$ and $\mathcal{K}_5$.
Substituting the above estimates in \eqref{kuz}, we get
\begin{align*}
\norma{u(T,\dott)-v(T,\dott)}_{L^1(\R)}&\le-\displaystyle\Lambda_{\epsilon,\epsilon_0}(v,u)+\norma{u_0-v_0}_{L^1(\R)}+\mathcal{K}_6(\epsilon+\epsilon_0+\gamma(v,\epsilon_0))\\
&\quad+\mathcal{K}_7\int_{0}^T
\norma{v(s,\dott)-u(s,\dott)}_{L^1(\R)}\d{s}.
\end{align*}
Now, the result follows by Gronwall's inequality.
\end{proof}
The remaining section is  dedicated to estimating the relative entropy functional $\Lambda_{\epsilon,\epsilon_0}(u^{\Delta},u)$ for which 
we follow the following notations: \\
For $i\in \Z, n\in \N, k\in \R,(t,x)\in Q_T,$ define
\begin{enumerate}
    \item $\eta_{i}^n(k):=\abs{u_i^n-k}$,
    \item $C_i^n:=C^n\times C_i$,
    \item $p_i^n(k):=G(u_i^n,k)=\sgn(u_i^n-k) (f(u_i^n)-f(k))$,
    \item $\mathcal{U}^{\Delta}(t,x):= \nu(\mu*\beta(u^{\Delta}(t)))(x)$.
\end{enumerate}

\begin{lemma}\label{lemma:est} The relative entropy functional $\Lambda_{\epsilon,\epsilon_0} (u^{\Delta},u)$ satisfies:
$$
-\Lambda_{\epsilon,\epsilon_0} (u^{\Delta},u)\leq \mathcal{C}\left(\frac{\D x}{\epsilon}+\frac{\D t}{\epsilon_0}\right),
$$
where $\mathcal{C}$ is a constant independent of $\Delta x, \Delta t.$
\end{lemma}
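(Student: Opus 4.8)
The strategy is to show that the piecewise constant approximation $u^{\Delta}$ is an \emph{approximate} entropy solution, with an entropy defect controlled by the mesh parameters. I would begin by inserting $u^{\Delta}$ (equal to $u_i^n$ on $C_i^n$) into $\Lambda_T(\dott,\Phi(\dott,\dott,s,y),k)$ with the Kruzhkov constant frozen at $k=u(s,y)$, so that each space-time integral over $Q_T$ collapses into a sum over the cells $C_i^n$, written through the quantities $\eta_i^n(k)$, $p_i^n(k)$ and $\mathcal{U}^{\Delta}$. The nonnegativity of the kernel $\Phi$ is what keeps the analysis within the entropy-inequality framework.

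The next step is discrete summation by parts. For the temporal term I would integrate over each time slab and apply Abel summation in $n$; the contributions at $t=0$ and $t=T$ cancel exactly against the two boundary integrals in $\Lambda_T$, leaving a sum of the discrete increments $\eta_i^{n+1}(k)-\eta_i^n(k)$ weighted by cell integrals of $\Phi$. Likewise, for the flux term $\int_{Q_T}\mathcal{U}^{\Delta}G(u^{\Delta},k)\Phi_x$ I would sum by parts in $i$ and use the consistency of the numerical entropy flux, $\mathcal{G}_{i+1/2}^n(a,a,k)=\nu(c_{i+1/2}^n)G(a,k)$, to generate discrete spatial increments of $\mathcal{G}_{i\pm1/2}^n$; the remaining term $-\sgn(u^{\Delta}-k)f(k)\mathcal{U}^{\Delta}_x\Phi$ is matched against the discrete nonlocal increment $\nu(c_{i+1/2}^n)-\nu(c_{i-1/2}^n)$. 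After this rearrangement, the leading contribution to $\Lambda_{\epsilon,\epsilon_0}(u^{\Delta},u)$ takes the form $-\sum_{i,n}D_i^n\,W_i^n$, where $D_i^n$ is the left-hand side of the discrete entropy inequality \eqref{apx:de} and $W_i^n\ge0$ is a cell integral of the nonnegative kernel $\Phi$; this contribution is nonnegative by \eqref{apx:de}, and everything else is a consistency error.

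I would then estimate the consistency errors in two groups. The \emph{temporal} errors arise from replacing pointwise values of $\Phi$ by cell averages in time, and from the mismatch between $\sgn(u_i^{n+1}-k)$ and $\sgn(u_i^n-k)$; using that the $L^1$-mass of $\Phi_t$ is of order $1/\epsilon_0$ together with the time-continuity estimate \eqref{apx:time} and the update bound $\abs{u_i^{n+1}-u_i^n}\lesssim\D t$, these are $O(\D t/\epsilon_0)$. The \emph{spatial} errors come from replacing $\mathcal{G}_{i+1/2}^n(u_i^n,u_{i+1}^n,k)$ by its diagonal value $\mathcal{G}_{i+1/2}^n(u_i^n,u_i^n,k)$ and from replacing $\Phi$ by cell averages in $x$; invoking the Lipschitz continuity of $\mathcal{F}$ and $f$, the BV bound \eqref{apx:bv}, and that the $L^1$-mass of $\Phi_x$ is of order $1/\epsilon$, these are $O(\D x/\epsilon)$. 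Summing over the cells and integrating in $(s,y)$ then yields the asserted bound.

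The step I expect to be the main obstacle is the nonlocal coefficient, which has no analogue in the classical Kuznetsov argument. One must control the discrepancy between the discrete weights $\nu(c_{i\pm1/2}^n)$ and the continuous field $\mathcal{U}^{\Delta}$ together with its derivative $\mathcal{U}^{\Delta}_x$. This entails bounding the quadrature error in \eqref{eq:conv}---the gap between $c_{i+1/2}^n$ and $(\mu*\beta(u^{\Delta}(t^n)))(x_{i+1})$---as well as the error in approximating $\mathcal{U}^{\Delta}_x$ by the divided difference $(\nu(c_{i+1/2}^n)-\nu(c_{i-1/2}^n))/\D x$. Using the smoothness $\mu,\beta,\nu\in C^2\cap W^{2,\infty}$ and the uniform $L^1$ and BV bounds furnished by Theorem~\ref{thm:2.2}, each of these is $O(\D x)$ and can be absorbed into the $O(\D x/\epsilon)$ term, so that the convergence rate suffers no loss.
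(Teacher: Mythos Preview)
Your strategy matches the paper's proof closely: write $-\Lambda_{\epsilon,\epsilon_0}(u^{\Delta},u)$ as a sum over cells, apply summation by parts in $i$ and $n$ so that the boundary terms cancel, invoke the discrete entropy inequality~\eqref{apx:de} for the principal contribution, and bound the remaining consistency errors via the BV and $L^1$-time-continuity estimates of Theorem~\ref{thm:2.2} together with the quadrature and divided-difference errors for the nonlocal weights. The treatment of the nonlocal coefficient that you single out as the main obstacle is exactly what the paper carries out (cf.~the estimates on $\mathcal{E}_2$, $\mathcal{E}_{31}$, $\mathcal{E}_{32}$).

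One step, however, would fail as you describe it. The mismatch between $\sgn(u_i^{n+1}-k)$ and $\sgn(u_i^n-k)$ cannot be controlled by an ``update bound $|u_i^{n+1}-u_i^n|\lesssim\Delta t$'': the sign function is discontinuous, so the difference can equal $2$ no matter how small $|u_i^{n+1}-u_i^n|$ is, and in any case \eqref{apx:time} is an $L^1$ estimate, not pointwise. The paper handles this term (called $\tilde{\mathcal E}_2$ there) by a further summation by parts in $n$: the sign factor is left alone (bounded by $1$), and the burden is shifted onto the time differences of $\int_{C_i}\Phi(s,y,t^{n+1},x)\,dx$ and of the nonlocal increments $\nu(c_{i+1/2}^{n})-\nu(c_{i-1/2}^{n})$. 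The first yields $\Delta t/\epsilon_0$ through $|\omega_{\epsilon_0}|_{BV(\R)}$, and the second yields $\Delta t$ through the $L^1$-time-continuity~\eqref{apx:time} of $u^{\Delta}$, both combined with the uniform bound $\sum_i|\nu(c_{i+1/2}^n)-\nu(c_{i-1/2}^n)|\le\mathcal{C}_3$. With this correction, your outline recovers the paper's argument.
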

\begin{proof}
 
Let $\sum_{i,n}$ denote the double summation $\sum_{i\in \Z}\sum_{n=0}^{N-1}.$ For the piecewise constant function $u^{\D}$ (cf.\ \eqref{scheme2}-\eqref{def_u_De}) the relative entropy can be written as
\begin{align*}
&-\Lambda_{\epsilon,\epsilon_0}(u^{\Delta},u) \\
 &\quad= -\int_{Q_T}\displaystyle \displaystyle\sum_{i,n}   \int_{C_i^n}\eta_{i}^n(u(s,y))\Phi_t(s,y,t,x) \d t  \d x \d s \d y \\
 &\qquad-\int_{Q_T}\displaystyle \displaystyle\sum_{i,n}   \int_{C_i^n} p_i^n(u(s,y))\mathcal{U}^{\Delta}(t,x)\Phi_x(s,y,t,x) \d t  \d x \d s \d y \\
 &\qquad + \int_{Q_T}\displaystyle\sum_{i,n}  \int_{C_i^n}\sgn(u_i^n-u(s,y))f(u(s,y))\partial_x\mathcal{U}^{\Delta}(t,x)\Phi(s,y,t,x) \d t  \d x \d s \d y \\
    &\qquad- \int_{Q_T}\sum_{i}   \int\limits_{C_i}  \eta_{i}^0(u(s,y)) \Phi(s,y,0,x)  \d x\d s  \d y +\int_{Q_T} \sum_{i}  \int\limits_{C_i}  \eta_{i}^N(u(s,y))  \Phi(s,y,T,x)  \d x\d s  \d y.
\end{align*}
 Applying the fundamental theorem of calculus, followed by summation by parts, we get 
\begin{align}
-\Lambda_{\epsilon,\epsilon_0}&(u^{\Delta},u) \nonumber\\
&\quad=\int_{Q_T}\displaystyle \displaystyle\sum_{i,n} \left(\eta_{i}^{n+1}(u(s,y))-\eta_{i}^{n}(u(s,y))\right)\int\limits_{C_i}\Phi(s,y,t^{n+1},x)  \d x\d s  \d y \nonumber\\
 &\qquad-\int_{Q_T}\displaystyle \displaystyle\sum_{i,n}   \int_{C_i^n}p_i^n(u(s,y))\mathcal{U}^{\Delta}(t,x)\Phi_x(s,y,t,x) \d t  \d x \d s \d y \nonumber\\
 & \qquad+ \int_{Q_T}\displaystyle\sum_{i,n}  \int_{C_i^n}\sgn(u_i^n-u(s,y))f(u(s,y))\mathcal{U}_x^{\Delta}(t,x)\Phi(s,y,t,x) \d t  \d x \d s \d y \nonumber\\
 &\quad:=\lambda_1+\lambda_2+\lambda_3. \label{CFL_LF.6a}
 \end{align}

We have
\begin{align}
    \lambda_2&=-\int_{Q_T}\displaystyle\sum_{i,n}  \int_{C_i^n}p_i^n(u(s,y))\nu(c_{i+1/2}^n)\Phi_x(s,y,t,x)  \d t\d x\d s  \d y\nonumber\\ 
&\quad-\int_{Q_T}\displaystyle\sum_{i,n}  \int_{C_i^n}p_i^n(u(s,y))\Phi_x(s,y,t,x) \left(\mathcal{U}^{\Delta}(t,x)-\nu(c_{i+1/2}^n)\right)  \d t\d x\d s  \d y \nonumber\\
&:=\lambda_{2}'+\mathcal{E}_2. \label{CFL_LF.6b}
\end{align}
For $(n,i)\in \N\times \Z$ and $(t,x)\in C_i^n$
observe that
\begin{align*}
&\left|\mathcal{U}^{\Delta}(t,x)-\nu(c_{i+1/2}^n)\right|\\
&\qquad
\le \left|\mathcal{U}^{\Delta}(t,x)-\mathcal{U}^{\Delta}(t^n,x)\right| +\left|\mathcal{U}^{\Delta}(t^n,x)-\mathcal{U}^{\Delta}(t^n,x_{i+1/2})\right|+\left|\mathcal{U}^{\Delta}(t^n,x_{i+1/2})-\nu(c_{i+1/2}^n)\right|. 
\end{align*}
Also for $t \in C^n,$ using \eqref{apx:time}, we have
\begin{equation}
\abs{\mathcal{U}^{\Delta}(t^n,x)-\mathcal{U}^{\Delta}(t,x)}\le \abs{\nu}_{\lip(\R)}\abs{\beta}_{\lip(\R)}\norma{\mu}_{\L\infty(\R)}\mathcal{L}_3\Delta t. 
\end{equation}
Now using the Lipschitz continuity of $\mathcal{U}^{\Delta}$ in the space variable, we have
\begin{equation*}
\abs{\mathcal{U}^{\Delta}(t^n,x)-\mathcal{U}^{\Delta}(t^n,x_{i+1/2})}\leq \mathcal{C}_1\Delta x.
\end{equation*}
Furthermore, 
\begin{align}  &\abs{\mathcal{U}^{\Delta}(t^n,x_{i+1/2})-\nu(c_{i+1/2}^n)}\\
&\quad\leq\abs{\nu}_{\lip(\R)}\abs{\beta}_{\lip(\R)}\abs{\sum_{p\in\Z}\int\limits_{C_p} \mu(x_{i+1/2}-y)u^n_p\d y-\Delta x\sum\limits_{p\in \Z} \mu(x_{i+1/2}-x_{p})u^{n}_{p}}\nonumber\\
&\quad=\abs{\nu}_{\lip(\R)}\abs{\beta}_{\lip(\R)}\abs{\sum_{p\in\Z} u^n_p\left(\int\limits_{C_p} \mu(x_{i+1/2}-y)\d y-\Delta x\mu(x_{i+1/2}-x_{p})\right)}\nonumber\\  &\quad=\abs{\nu}_{\lip(\R)}\abs{\beta}_{\lip(\R)}\norma{u_0^{\Delta}}_{L^1(\R)} \abs{\mu}_{\lip(\R)}\Delta x:=\mathcal{C}_1\Delta x \label{nn}.
  \end{align}  

Finally, combining all the above estimates we get, 
\begin{align*}
\nonumber\abs{\mathcal{E}_2}&\le
 \int_{Q_T}\displaystyle\sum_{i,n}  \mathcal{C}_1\Delta x \abs{f(u_i^n)-f(u(s,y))}\int_{C_i^n}  \abs{\Phi_x(s,y,t,x)}   \d t\d x\d s  \d y\\\nonumber
&\leq\int_{Q_T}\displaystyle\sum_{i,n}  \mathcal{C}_1\Delta x (\abs{f(u_i^n)}+\abs{f(u(s,y))})\int_{C_i^n} \abs{\Phi_x(s,y,t,x)}   \d t\d x\d s  \d y\\
&=\displaystyle\sum_{i,n} \int_{  C_i^n}  \mathcal{C}_1\Delta x (\abs{f(u_i^n)}\int_{Q_T} \abs{\Phi_x(s,y,t,x)}   \d t\d x\d s  \d y\\
&+\int_{Q^2_T}\displaystyle  \mathcal{C}_1\Delta x\abs{f(u(s,y))}\abs{\Phi_x(s,y,t,x)}   \d t\d x\d s  \d y\\
&=\mathcal{C}_1\abs{f}_{\lip(\R)}\norma{u^{\D}}_{L^1(Q_T)}\frac{\Delta x}{\epsilon}+\displaystyle  \mathcal{C}_1\abs{f}_{\lip(\R)}\norma{u}_{L^1(Q_T)}\frac{\Delta x}{\epsilon}.\\
\end{align*}
Thus, we have
\begin{equation}\label{L2}
    \lambda_2= \lambda_2'+\mathcal{O}\left(\frac{\Delta x}{\epsilon}\right).
\end{equation}
Now, we consider 
\begin{align}
    \lambda_3&= \int_{Q_T}\displaystyle\sum_{i,n}  \sgn(u_i^{n}-u(s,y))f(u(s,y)) (\nu(c_{i+\frac{1}{2}}^n)-\nu(c_{i-\frac{1}{2}}^n))\int\limits_{C^{n}} \Phi(s,y,t,x_{i+1/2})  \d t \d s \d y \nonumber\\
    &+\int_{Q_T}\displaystyle\sum_{i,n}  \sgn(u_i^{n}-u(s,y))f(u(s,y))\nonumber \\
    & \quad \quad  \times 
    \int_{C_i^n}\mathcal{U}_x^{\Delta}(t,x)  \left(\Phi(s,y,t,x)-\Phi(s,y,t,x_{i+1/2})\right) \d x \d t  \d y \d s \nonumber\\
    &+ \int_{Q_T}\displaystyle\sum_{i,n}  \sgn(u_i^{n}-u(s,y))f(u(s,y))\nonumber \\ 
    & \quad \quad \times 
    \int_{C_i^n}\left(\mathcal{U}_x^{\Delta}(t,x)  -\frac1{\Delta x}\big(\nu(c_{i+\frac{1}{2}}^n)-\nu(c_{i-\frac{1}{2}}^n)\big)\right)\Phi(s,y,t,x_{i+1/2}) \d x \d t \d y \d s\nonumber\\
    &\quad :=\lambda_3'+\mathcal{E}_{31}+\mathcal{E}_{32}. \label{CFL_LF.9a}
\end{align}
The error terms can be estimated as below,
\begin{align*}
    \abs{\mathcal{E}_{31}} &\leq  \int_{Q_T}\displaystyle\sum_{i,n}  \abs{f(u(s,y))}
    \int_{C_i^n} \abs{\mathcal{U}_x^{\Delta}(t,x) }  \abs{ \Phi(s,y,t,x)-\Phi(s,y,t,x_{i+1/2})} \d t \d x \d s \d y \\
    &\leq
|\nu|_{\lip(\R)}|\beta|_{\lip(\R)}|\mu|_{\lip(\R)} \norma{u_0}_{L^1(\R)}\\&\qquad \times\int_{Q_T}\displaystyle\sum_{i,n}  \abs{f(u(s,y))}
    \int_{C_i^n}   \abs{ \Phi(s,y,t,x)-\Phi(s,y,t,x_{i+1/2})} \d x \d t  \d y \d s\\
    &\leq  |\nu|_{\lip(\R)}|\beta|_{\lip(\R)}|\mu|_{\lip(\R)} \norma{u_0}_{L^1(\R)}\\&\qquad \times\int_{Q_T}\displaystyle \abs{f(u(s,y))}
       \sum_{i}\int\limits_{C_i}     \abs{\omega_{\epsilon} (y-x)-\omega_{\epsilon}(y-x_{i+1/2})} \d x \d s  \d y
    \\
    &\leq  |\nu|_{\lip(\R)}|\beta|_{\lip(\R)}|\mu|_{\lip(\R)} \norma{u_0}_{L^1(\R)}|w^{\epsilon}|_{BV(\R)} \D x \int_{Q_T}\displaystyle \abs{f(u(s,y))}
     \d s  \d y \\
     &\leq |\nu|_{\lip(\R)}|\beta|_{\lip(\R)}|\mu|_{\lip(\R)} \norma{u_0}_{L^1(\R)} \norma{f(u)}_{L^1(Q_T)}\frac{\D x}{\epsilon},
\end{align*}
using Theorem \ref{thm:2.2}, equation \eqref{apx:L1}, and 
\begin{equation*}
|\mathcal{U}_x^{\Delta}(t,x)|=|\nu'((\beta(u^{\Delta})*\mu)(t,x))(\mu'*\beta(u^\Delta))(t,x)|\leq |\nu|_{\lip(\R)}|\beta|_{\lip(\R)}|\mu|_{\lip(\R)} \norma{u_0}_{L^1(\R)}.
\end{equation*}
Furthermore, we have, by applying 
fundamental theorem of calculus and rearrange the terms, 
\begin{align*}
\abs{\mathcal{E}_{32} }&=\Big|\int_{Q_T} \sum_{i,n}  \sgn(u_i^{n}-u(s,y))f(u(s,y))\int\limits_{C^n}\Phi(s,y,t,x_{i+1/2}) \\
&\qquad \times
   \int\limits_{C_i} \big( \mathcal{U}_x^{\Delta}(t,x)-\frac1{\Delta x}(\nu(c_{i+\frac{1}{2}}^n)-\nu(c_{i-\frac{1}{2}}^n))\big)\d x \d t \d y \d s \Big| \\
&=\Big|\int_{Q_T} \sum_{i,n} \sgn(u_i^{n}-u(s,y)) f(u(s,y))\\
&\qquad  \times{\left(
   \mathcal{U}^{\Delta}(t^n,x_{i+1/2})-\mathcal{U}^{\Delta}(t^n,x_{i-1/2})-\nu(c_{i+\frac{1}{2}}^n)+\nu(c_{i-\frac{1}{2}}^n)\right)\int\limits_{C^n}\Phi(s,y,t,x_{i+1/2}) \d s \d t \d y} 
   \\
&\quad+{\int_{Q_T}\displaystyle\sum_{i,n} \sgn(u_i^{n}-u(s,y)) f(u(s,y))\int\limits_{C^n}\Phi(s,y,t,x_{i+1/2})} \\
& \qquad  \times{\big(
  \mathcal{U}^{\Delta}(t,x_{i+1/2})- \mathcal{U}^{\Delta}(t^n,x_{i+1/2})  -\mathcal{U}^{\Delta}(t,x_{i-1/2})+\mathcal{U}^{\Delta}(t^n,x_{i-1/2})\big) \d t \d s \d y} \Big|.
\end{align*}
Apply summation by parts in $i$ to get,
\begin{align*}
\abs{\mathcal{E}_{32} }&=\Big|{\int_{Q_T}\displaystyle\sum_{i,n} \sgn(u_i^{n}-u(s,y)) f(u(s,y))}\\
& \qquad \times{\left(
   \mathcal{U}^{\Delta}(t^n,x_{i+1/2})-\nu(c_{i+\frac{1}{2}}^n)\right)\int\limits_{C^n}(\Phi(s,y,t,x_{i+1/2})-\Phi(s,y,t,x_{i-1/2})) \d t \d s \d y} \Big|
   \\
&\quad+\Big|{\int_{Q_T}\displaystyle\sum_{i,n} \sgn(u_i^{n}-u(s,y)) f(u(s,y))}\\
& \qquad \times {\int\limits_{C^n} \big(
  \mathcal{U}^{\Delta}(t,x_{i+1/2})- \mathcal{U}^{\Delta}(t^n,x_{i+1/2})\big) (\Phi(s,y,t,x_{i+1/2})-\Phi(s,y,t,x_{i-1/2}))\d t \d s \d y}\big| \\
  &:=\mathcal{E}_{321}+\mathcal{E}_{322}.
  \end{align*}
Now,  using \eqref{nn}, we have
  \begin{align*}
\abs{\mathcal{E}_{321}}
&\le{\int_{Q_T} \Big(\sum_n\abs{f(u(s,y))}\abs{\nu}_{\lip(\R)}\abs{\beta}_{\lip(\R)}\norma{u^{\Delta}}_{L^1(Q_T)} \abs{\mu}_{\lip(\R)}\Delta x}\\
&  \qquad \times \int\limits_{C^n}\displaystyle\sum_{i}\abs{\Phi(s,y,t,x_{i+1/2})-\Phi(s,y,t,x_{i-1/2})} \Big) \d t \d s \d y \\
&\leq{\int_{Q_T} \abs{f(u(s,y))} \abs{\nu}_{\lip(\R)}\abs{\beta}_{\lip(\R)}\norma{u^{\Delta}}_{L^1(Q_T)} \abs{\mu}_{\lip(\R)}\Delta x|\omega_{\epsilon}|_{BV(\R)} \d s \d y}
   \\
   &\le{\int_{Q_T} \abs{f(u(s,y))} \abs{\nu}_{\lip(\R)}\abs{\beta}_{\lip(\R)}\norma{u^{\Delta}}_{L^1(Q_T)} \abs{\mu}_{\lip(\R)}\Delta x \frac{1}{\epsilon} \d s  \d y}\\
   &= {\abs{\nu}_{\lip(\R)}\abs{\beta}_{\lip(\R)}\norma{u^{\Delta}}_{L^1(Q_T)} \abs{\mu}_{\lip(\R)} \abs{f}_{\lip(\R)}\norma{u}_{L^1(Q_T)}\frac{\Delta x}{\epsilon}}.
 \end{align*}
 Using \eqref{apx:time}, for $t\in C^n,$ we have 
 \begin{align*}
&\abs{\mathcal{U}^{\Delta}(t,x_{i+1/2})- \mathcal{U}^{\Delta}(t^n,x_{i+1/2})} \\&\quad\leq \abs{\nu}_{\lip(\R)}\abs{\beta}_{\lip(\R)} \norma{\mu}_{L^{\infty}(\R)}\int\limits_{\R} \abs{u^{\D}(t,x_{i+1/2}-y)\mu(y)-u^{\D}(t^n,x_{i+1/2}-y)\mu(y)} dy\\ 
&\quad= \abs{\nu}_{\lip(\R)}\abs{\beta}_{\lip(\R)}\norma{\mu}_{L^{\infty}(\R)} \norma{u^{\D}(t,\dott)-u^{\D}(t^n,\dott)}_{L^1(\R)}\\
&\quad \le  \abs{\nu}_{\lip(\R)}\abs{\beta}_{\lip(\R)}\norma{\mu}_{L^{\infty}(\R)} \mathcal{L}_3\Delta t.
\end{align*}
Consequently,
  \begin{align*}
\abs{\mathcal{E}_{322}}
&\le\int_{Q_T}\sum_n\abs{f(u(s,y))}\abs{\nu}_{\lip(\R)}\abs{\beta}_{\lip(\R)}\norma{\mu}_{L^{\infty}(\R)} \mathcal{L}_3\Delta t\\
&\qquad  \times \int\limits_{C^n}\displaystyle\sum_{i}\abs{\Phi(s,y,t,x_{i+1/2})-\Phi(s,y,t,x_{i-1/2})} \d t \d s \d y \\
&\leq{\int_{Q_T} \abs{f(u(s,y))} \abs{\nu}_{\lip(\R)}\abs{\beta}_{\lip(\R)}\norma{\mu}_{L^{\infty}(\R)} \mathcal{L}_3\Delta t|w^{\epsilon}|_{BV(\R)}\int_{0}^T|\omega_{\epsilon_0}(s-t)| \d t \d s \d y}
   \\
   &\le {\int_{Q_T} \abs{f(u(s,y))} \abs{\nu}_{\lip(\R)}\abs{\beta}_{\lip(\R)}\norma{\mu}_{L^{\infty}(\R)} \mathcal{L}_3\Delta t \frac{1}{\epsilon} \d s  \d y}\\
   &= \abs{\nu}_{\lip(\R)}\abs{\beta}_{\lip(\R)}\norma{\mu}_{L^{\infty}(\R)} \mathcal{L}_3 \abs{f}_{\lip(\R)} \norma{u}_{L^1(Q_T)}\frac{\Delta t}{\epsilon}.
 \end{align*}


Finally, combining the above estimates, we get:
\begin{equation}
\lambda_3 = \lambda_3' +\mathcal{O}\left(\frac{\D x}{\epsilon}\right).
\end{equation}
Thus, so far we have proved
\begin{equation}\label{est:5}
-\Lambda_{\epsilon,\epsilon_0}(u^{\Delta},u)=\lambda_1+\lambda_2'+\lambda_3'+\mathcal{O}\left(\frac{\D x}{\epsilon}\right).
\end{equation}
Recall $\lambda_1$, cf.\ \eqref{CFL_LF.6a}, 
\begin{align*}
\lambda_1&=\int_{Q_T}\displaystyle \displaystyle\sum_{i,n}  \left(\eta_{i}^{n+1}(u(s,y))-\eta_{i}^{n}(u(s,y))\right)\int\limits_{C_i}\Phi(s,y,t^{n+1},x)  \d x\d s  \d y  \\    
&\le -\lambda \int_{Q_T}\displaystyle \displaystyle\sum_{i,n}  \big(
\mathcal{G}^{n}_{i+1/2}(u_i^{n} ,u_{i+1}^{n},u(s,y) )-\mathcal{G}^{n}_{i-1/2}(u_{i-1}^{n} ,u_i^{n},u(s,y) )\big) \int\limits_{C_i}\Phi(s,y,t^{n+1},x)  \d x\d s  \d y\\
 &\quad -\lambda \int_{Q_T}\displaystyle \displaystyle\sum_{i,n}\sgn(u_i^{n+1}-u(s,y))f(u(s,y) )(\nu(c_{i+\frac{1}{2}}^{n})-\nu(c_{i-\frac{1}{2}}^{n}))
\int\limits_{C_i}\Phi(s,y,t^{n+1},x)  \d x\d s  \d y\\
&:= A_2+A_3,
\end{align*}
by applying the discrete entropy inequality \eqref{apx:de}.

Furthermore, we can rewrite $\lambda_2'$, cf.\ \eqref{CFL_LF.6b}, as follows
\begin{align*}
\lambda_2'&= -\int_{Q_T}\displaystyle\sum_{i,n}  \int_{C_i^n}p_i^n(u(s,y))\nu(c_{i+1/2}^n)\Phi_x(s,y,t,x)  \d t\d x\d s  \d y\\
&=\int_{Q_T}\displaystyle\sum_{i,n}    \int\limits_{C^{n}} \left(p_i^n(u(s,y))\nu(c_{i+1/2}^n)- p_{i-1}^n(u(s,y))\nu(c_{i-1/2}^n) \right)\Phi(s,y,t,x_{i+1/2})  \d t\d s  \d y
\end{align*}
by using the fundamental theorem of calculus, followed by summation by parts.

\begin{enumerate}[label=\textbf{Claim 1}]
	\item \label{C1} $A_2+\lambda_2' =\mathcal{O}\left(\frac{\D x}{\epsilon}+\frac{\D t}{\epsilon_0}\right)$.
\end{enumerate}
Adding and subtracting 
$$
\lambda\int_{Q_T}\displaystyle\sum_{i,n}\big(p_i^n(u(s,y))\nu(c_{i+1/2}^n)-p_{i-1}^n(u(s,y))\nu(c_{i-1/2}^n)\big)\int\limits_{C_i}\Phi(s,y,t^{n+1},x)\d x\d y \d s
$$  
we have
\begin{align}
A_2&+\lambda'_2\\&=-\lambda\int_{Q_T}\big(\mathcal{G}^{n}_{i+1/2}(u_i^{n} ,u_{i+1}^{n},u(s,y) )-\mathcal{G}^{n}_{i-1/2}(u_{i-1}^{n} ,u_i^{n},u(s,y))\big)\int\limits_{C_i}\Phi(s,y,t^{n+1},x)\d x\d y \d s\nonumber\\
&\quad+\int_{Q_T}\displaystyle\sum_{i,n}\big(p_i^n(u(s,y))\nu(c_{i+1/2}^n)-p_{i-1}^n(u(s,y)\big)\nu(c_{i-1/2}^n))\nonumber\\
&\qquad \times\big(\int\limits_{C^{n}} \Phi(s,y,t,x_{i+\frac{1}{2}})\d t-\lambda\int\limits_{C_i}\Phi(s,y,t^{n+1},x)\d x\big)\d y \d s\nonumber\\
&\quad+\lambda\int_{Q_T}\displaystyle\sum_{i,n}\big(p_i^n(u(s,y))\nu(c_{i+1/2}^n)-p_{i-1}^n(u(s,y))\nu(c_{i-1/2}^n)\big)\int\limits_{C_i}\Phi(s,y,t^{n+1},x)\d x\d y \d s\nonumber\\
\nonumber\\
&=\lambda\int_{Q_T}\displaystyle\sum_{i,n}\big(p_i^n(u(s,y))\nu(c_{i+1/2}^n)-\mathcal{G}^{n}_{i+1/2}(u_i^{n} ,u_{i+1}^{n},u(s,y) )\big)\int\limits_{C_i}\Phi(s,y,t^{n+1},x)\d x\d y \d s\nonumber\\
&\quad-\lambda\int_{Q_T}\displaystyle\sum_{i,n}\big(p_{i-1}^n(u(s,y))\nu(c_{i-1/2}^n)-\mathcal{G}^{n}_{i-1/2}(u_{i-1}^{n} ,u_i^{n},u(s,y))\big)\int\limits_{C_i}\Phi(s,y,t^{n+1},x)\d x\d y \d s\nonumber\\
&\quad+\int_{Q_T}\displaystyle\sum_{i,n}\big(p_i^n(u(s,y))\nu(c_{i+1/2}^n)-p_{i-1}^n(u(s,y))\nu(c_{i-1/2}^n))\big)\nonumber\\
&\qquad \times\big(\int\limits_{C^{n}} \Phi(s,y,t,x_{i+\frac{1}{2}})\d t-\lambda\int\limits_{C_i}\Phi(s,y,t^{n+1},x)\d x\big)\d y \d s\nonumber.
\end{align}
Apply summation by parts to get
\begin{align}
A_2+\lambda'_2&=\lambda\int_{Q_T}\displaystyle\sum_{i,n}\big(\mathcal{G}^{n}_{i+1/2}(u_i^{n} ,u_{i+1}^{n},u(s,y) )-p_i^n(u(s,y))\nu(c_{i+1/2}^n)\big)\nonumber\\
&\qquad\times\big(\int\limits_{C_{i+1}}\Phi(s,y,t^{n+1},x)\d x-\int\limits_{C_i}\Phi(s,y,t^{n+1},x)\d x\big)\d y \d s\nonumber\\
&\quad+\int_{Q_T}\displaystyle\sum_{i,n}\big(p_i^n(u(s,y))\nu(c_{i+1/2}^n)-p_{i-1}^n(u(s,y))\nu(c_{i-1/2}^n)\big)\nonumber\\
&\qquad\times\big(\int\limits_{C^{n}} \Phi(s,y,t,x_{i+\frac{1}{2}})\d t-\lambda\int\limits_{C_i}\Phi(s,y,t^{n+1},x)\d x\big)\d y \d s,\nonumber
\end{align}
where for \begin{align*}
\mathcal{C}_1&=\mathcal{C}_1(\abs{f}_{\lip(\R)},\norma{\nu}_{L^{\infty}(\R)})\\ \mathcal{C}_2&=\mathcal{C}_2\left(\abs{f}_{\lip(\R)},\norma{\nu}_{L^{\infty}(\R)}, \norma{u}_{L^{\infty}(\overline{Q}_T}), \norma{u^{\D}}_{L^{\infty}(\overline{Q}_T)}\right)\end{align*} the following estimates hold:
\begin{align}
\abs{\mathcal{G}^{n}_{i+1/2}(u^n_i,u^n_{i+1},u(s,y))-\nu(c^n_{i+1/2}) p_i^n(u(s,y))  }&\leq \mathcal{C}_1   \abs{u^n_{i+1}-u^n_i}, \label{est:1} \\
\abs{\nu(c^n_{i+1/2}) p_i^n(u(s,y))-\nu(c^n_{i-1/2})p_{i-1}^n(u(s,y))} & \leq \mathcal{C}_2 \big(\abs{\nu(c^n_{i+1/2})-\nu(c^n_{i-1/2})} \nn\\&\qquad+ \abs{u^n_{i+1}-u^n_i} \big).
\label{est:2}
\end{align}
Using \eqref{est:1}--\eqref{est:2}, we have
\begin{align*}
A_2+\lambda_2'  &\leq \mathcal{C}_1 \lambda\int_{Q_T}\displaystyle\sum_{i,n} \abs{u^n_{i+1}-u^n_i}\nonumber\\
&\qquad\times\big|\int\limits_{C_{i+1}}\Phi(s,y,t^{n+1},x)\d x-\int\limits_{C_i}\Phi(s,y,t^{n+1},x)\d x\big|\d y \d s\nonumber\\
&\quad+\mathcal{C}_2 \int_{Q_T}\displaystyle\sum_{i,n}\left[\abs{\nu(c^n_{i+1/2})-\nu(c^n_{i-1/2})} + \abs{u^n_{i+1}-u^n_i} \right]\nonumber\\
&\qquad\times\big|\int\limits_{C^{n}} \Phi(s,y,t,x_{i+\frac{1}{2}})\d t-\lambda\int\limits_{C_i}\Phi(s,y,t^{n+1},x)\d x\big|\d y \d s. \nonumber
\end{align*}

Now, since $\mu$ has bounded variation, the claim follows because of the following estimates, 
\begin{align}  
\sum_{i}\Big|\nu(c_{i+1/2}^n)-\nu(c_{i-1/2}^n)\Big|&\le\abs{\nu}_{\lip(\R)}\sum_{i,p}\Delta x\Big|\beta(u^n_{p+1/2})(\mu_{i+1/2-p}-\mu_{i-1/2-p})\Big|\nonumber\\
&\le \abs{\nu}_{\lip(\R)}\sum_{i,p}\Delta x\abs{\beta}_{\lip(\R)}\abs{u^n_{p+1/2}}\int\limits_{C_{i-p}}\abs{\mu'(x)}\d x\nonumber\\
&=\abs{\nu}_{\lip(\R)}\sum_{p}\Delta x\abs{\beta}_{\lip(\R)}\abs{u^n_{p+1/2}}\int\limits_{\R}\abs{\mu'(x)}\d x\nonumber\\
&\leq \abs{\nu}_{\lip(\R)}\abs{\beta}_{\lip(\R)}\norma{u_0}_{L^1(\R)}\abs{\mu}_{BV(\R)}:=\mathcal{C}_3\label{cc},
\end{align}
which is true because of \eqref{apx:L1} and the estimates
\begin{align}
\int_{Q_T} \Big(\int\limits_{C^{n}} \Phi(s,y,t,x_{i+\frac{1}{2}})\d t-\lambda\int\limits_{C_i}\Phi(s,y,t^{n+1},x)\d x \Big) \d y \d s &=\mathcal{O} \left(\frac{\D x^2}{\epsilon} +\frac{\D t^2}{\epsilon_0}\right),\label{est:3}\\
\int_{Q_T}\Big(\int\limits_{C_{i+1}}\Phi(s,y,t^{n+1},x)\d x-\int\limits_{C_i}\Phi(s,y,t^{n+1},x)\d x \Big) \d y \d s&= \mathcal{O}\left( \frac{\D x^2}{\epsilon}\right)\label{est:4},
\end{align}
the proofs of which can be found in \cite[Ex.~3.17]{holden2015front}.

\begin{enumerate}[label=\textbf{Claim 2}]
\item \label{C2}  $A_3+\lambda_3' =\mathcal{O}\left(\frac{\D x}{\epsilon}\right)$.
\end{enumerate} We find
\begin{align*}
A_3&+\lambda_3'\\
&= -\lambda \int_{Q_T}\displaystyle \displaystyle\sum_{i,n}\sgn(u_i^{n+1}-u(s,y))f(u(s,y) )(\nu(c_{i+\frac{1}{2}}^{n})-\nu(c_{i-\frac{1}{2}}^{n}))
\int\limits_{C_i}\Phi(s,y,t^{n+1},x)  \d x\d s  \d y\\
&\quad+\int_{Q_T}\displaystyle\sum_{i,n}  \sgn(u_i^{n}-u(s,y))f(u(s,y)) (\nu(c_{i+\frac{1}{2}}^n)-\nu(c_{i-\frac{1}{2}}^n))\int\limits_{C^{n}} \Phi(s,y,t,x_{i+1/2})  \d t \d s \d y\\
&= -\lambda \int_{Q_T}\displaystyle \displaystyle\sum_{i,n}\sgn(u_i^{n+1}-u(s,y))f(u(s,y) )(\nu(c_{i+\frac{1}{2}}^{n})-\nu(c_{i-\frac{1}{2}}^{n}))
\int\limits_{C_i}\Phi(s,y,t^{n+1},x)  \d x\d s  \d y\\
&\quad {+\lambda \int_{Q_T}\displaystyle \displaystyle\sum_{i,n}\sgn(u_i^{n}-u(s,y))f(u(s,y) )(\nu(c_{i+\frac{1}{2}}^{n})-\nu(c_{i-\frac{1}{2}}^{n}))
\int\limits_{C_i}\Phi(s,y,t^{n+1},x)  \d x\d s  \d y}\\
&\quad-\lambda \int_{Q_T}\displaystyle \displaystyle\sum_{i,n}\sgn(u_i^{n}-u(s,y))f(u(s,y) )(\nu(c_{i+\frac{1}{2}}^{n})-\nu(c_{i-\frac{1}{2}}^{n}))
\int\limits_{C_i}\Phi(s,y,t^{n+1},x)  \d x\d s  \d y\\
&\quad+\int_{Q_T}\displaystyle\sum_{i,n}  \sgn(u_i^{n}-u(s,y))f(u(s,y)) (\nu(c_{i+\frac{1}{2}}^n)-\nu(c_{i-\frac{1}{2}}^n))\int\limits_{C^{n}} \Phi(s,y,t,x_{i+1/2})  \d t \d s \d y\\
&=
\int_{Q_T} \displaystyle \displaystyle\sum_{i,n}\sgn(u_i^{n}-u(s,y))f(u(s,y) )(\nu(c_{i+\frac{1}{2}}^{n})-\nu(c_{i-\frac{1}{2}}^{n}))\\
&\qquad \qquad \times \big(\int\limits_{C^{n}} \Phi(s,y,t,x_{i+1/2})\d t )   -\lambda\int\limits_{C_i}\Phi(s,y,t^{n+1},x) \d x \big)  \d s \d y\\
&\quad-\lambda \int_{Q_T} \displaystyle \displaystyle\sum_{i,n}\big(\sgn(u_i^{n+1}-u(s,y))-\sgn(u_i^{n}-u(s,y))\big)\\
&\qquad \qquad  \times  f(u(s,y))(\nu(c_{i+\frac{1}{2}}^{n})-\nu(c_{i-\frac{1}{2}}^{n}))
\int\limits_{C_i}\Phi(s,y,t^{n+1},x)  \d x   \d s \d y\\
&:=\tilde{\mathcal{E}}_1+ \tilde{\mathcal{E}}_2.
\end{align*}
The terms $\tilde{\mathcal{E}}_1$ and $\tilde{\mathcal{E}}_2$ can be estimated as follows:
\begin{align*}
\abs{\tilde{\mathcal{E}}_1}
&\le \int_{Q_T}\displaystyle \displaystyle\sum_{i,n}|f(u(s,y))||\nu(c_{i+\frac{1}{2}}^{n})-\nu(c_{i-\frac{1}{2}}^{n})|
\\&\qquad \times\Big|\int\limits_{C^{n}} \Phi(s,y,t,x_{i+1/2})\d t -\lambda\int\limits_{C_i}\Phi(s,y,t^{n+1},x) \d x  \Big|  \d s \d y\\
&\le \norma{f(u)}_{L^{\infty}(Q_T)} \sum_{i,n}\Big|\nu(c_{i+1/2}^n)-\nu(c_{i-1/2}^n)\Big|\\&\qquad\times \int_{Q_T}\Big|\int\limits_{C^{n}} \Phi(s,y,t,x_{i+\frac{1}{2}})\d t-\lambda\int\limits_{C_i}\Phi(s,y,t^{n+1},x)\d x\Big|\d y \d s\\
&\le T\norma{f(u)}_{L^{\infty}(Q_T)}\mathcal{C}_3 \left(\frac{\Delta x}{\epsilon}+\frac{\Delta t}{\epsilon_0}\right),
\end{align*}
 using \eqref{cc}--\eqref{est:3}. 
Furthermore,  
\begin{align*}
\tilde{\mathcal{E}}_2&=-\lambda \int_{Q_T}\displaystyle \displaystyle\sum_{i}\sum_{n=1}^{N}\sgn(u_i^{n}-u(s,y)) f(u(s,y))(\nu(c_{i+\frac{1}{2}}^{n-1})-\nu(c_{i-\frac{1}{2}}^{n-1}))
\int\limits_{C_i}\Phi(s,y,t^{n},x)  \d x\d s  \d y\\
&\quad+\lambda \int_{Q_T}\displaystyle \displaystyle\sum_{i}\sum_{n=1}^{N}\sgn(u_i^{n}-u(s,y)) f(u(s,y))(\nu(c_{i+\frac{1}{2}}^{n})-\nu(c_{i-\frac{1}{2}}^{n}))
\int\limits_{C_i}\Phi(s,y,t^{n+1},x)  \d x\d s  \d y\\
&\quad+\lambda \int_{Q_T}\displaystyle \displaystyle\sum_{i}\sgn(u_i^{0}-u(s,y)) f(u(s,y))(\nu(c_{i+\frac{1}{2}}^{0})-\nu(c_{i-\frac{1}{2}}^{0}))
\int\limits_{C_i}\Phi(s,y,t^1,x)  \d x\d s  \d y\\
&\quad-\lambda \int_{Q_T}\displaystyle \displaystyle\sum_{i}\sgn(u_i^{N}-u(s,y)) f(u(s,y))(\nu(c_{i+\frac{1}{2}}^{N})-\nu(c_{i-\frac{1}{2}}^{N}))
\int\limits_{C_i}\Phi(s,y,t^{N+1},x)  \d x\d s  \d y.
\end{align*}
Adding and subtracting the term
\begin{equation*}
-\lambda \int_{Q_T}\displaystyle \displaystyle\sum_{i}\sum_{n=1}^{N}\sgn(u_i^{n}-u(s,y)) f(u(s,y))(\nu(c_{i+\frac{1}{2}}^{n-1})-\nu(c_{i-\frac{1}{2}}^{n-1}))
\int\limits_{C_i}\Phi(s,y,t^{n+1},x)  \d x\d s  \d y,
\end{equation*}
we have
\begin{align*}
\tilde{\mathcal{E}}_2&=-\lambda \int_{Q_T} \displaystyle\displaystyle\sum_{i}\sum_{n=1}^{N}\sgn(u_i^{n}-u(s,y)) f(u(s,y))(\nu(c_{i+\frac{1}{2}}^{n-1})-\nu(c_{i-\frac{1}{2}}^{n-1}))\\
&\qquad \qquad  \times
\int\limits_{C_i}(\Phi(s,y,t^{n},x)-\Phi(s,y,t^{n+1},x))  \d x  \d s \d y\\
&\quad+\lambda \int_{Q_T}\displaystyle \sum_{i}\sum_{n=1}^{N}\sgn(u_i^{n}-u(s,y)) f(u(s,y))\Big[\nu(c_{i+\frac{1}{2}}^{n})-\nu(c_{i-\frac{1}{2}}^{n})-\nu(c_{i+\frac{1}{2}}^{n-1})+\nu(c_{i-\frac{1}{2}}^{n-1})\Big]\\
&\qquad \qquad  \times\int\limits_{C_i}\Phi(s,y,t^{n+1},x) \d x\d s  \d y\\
&\quad+\lambda \int_{Q_T} \displaystyle \displaystyle\sum_{i}\sgn(u_i^{0}-u(s,y)) f(u(s,y))(\nu(c_{i+\frac{1}{2}}^{0})-\nu(c_{i-\frac{1}{2}}^{0}))
\int\limits_{C_i}\Phi(s,y,t^1,x)  \d x \d s \d y\\
&\quad-\lambda \int_{Q_T} \displaystyle \displaystyle\sum_{i}\sgn(u_i^{N}-u(s,y)) f(u(s,y))(\nu(c_{i+\frac{1}{2}}^{N})-\nu(c_{i-\frac{1}{2}}^{N}))
\int\limits_{C_i}\Phi(s,y,t^{n+1},x) \d x \d s \d y\\
&:=\tilde{\mathcal{E}}_{21}+\tilde{\mathcal{E}}_{22} +\tilde{\mathcal{E}}_{23} +\tilde{\mathcal{E}}_{24}.
\end{align*}
Now, let us estimate $\tilde{\mathcal{E}}_{21}$.
\begin{align*}
   \abs{\tilde{\mathcal{E}}_{21}}
   &= \Big|-\lambda \int_{Q_T} \displaystyle \displaystyle\sum_{i}\sum_{n=1}^{N}\sgn(u_i^{n}-u(s,y)) f(u(s,y))(\nu(c_{i+\frac{1}{2}}^{n-1})-\nu(c_{i-\frac{1}{2}}^{n-1}))\\
&\qquad \qquad  \times
\int\limits_{C_i}\big(\Phi(s,y,t^{n},x)-\Phi(s,y,t^{n+1},x)\big) \d x  \d s  \d y\Big|\\
&\le \lambda \norma{f(u)}_{L^{\infty}(Q_T)}\\& \qquad\times \int_{Q_T}\Big( \sum_{i}\sum_{n=1}^{N} 
 \abs{\nu(c_{i+\frac{1}{2}}^{n-1})-\nu(c_{i-\frac{1}{2}}^{n-1})} 
\int\limits_{C_i}\abs{\Phi(s,y,t^{n},x)-\Phi(s,y,t^n+\Delta t,x)} \d x \Big) \d s \d y\\
&\le \D x\, \lambda \norma{f(u)}_{L^{\infty}(Q_T)}\\&\qquad  \times \int_{0}^T \Big( \displaystyle \displaystyle\sum_{i}\sum_{n=1}^{N} 
 \abs{\nu(c_{i+\frac{1}{2}}^{n-1})-\nu(c_{i-\frac{1}{2}}^{n-1})}
\abs{\omega_{\epsilon_0}(s-t^n)-\omega_{\epsilon_0}(s-t^n-\Delta t)}  \Big) \d s \\
&\le  \D t \, \mathcal{C}_3 \norma{f(u)}_{L^{\infty}(Q_T)} \sum_{n=1}^{N} \int_{0}^T \Big( \abs{\omega_{\epsilon_0}(s-t^n)-\omega_{\epsilon_0}(s-t^n-\Delta t)} \Big) \d s \\
&\le \Delta t \, \mathcal{C}_3\norma{f(u)}_{L^{\infty}(Q_T)} \sum_{n=1}^{N}  \abs{\omega_{\epsilon_0}}_{BV(\R)}\Delta t\\
&=\mathcal{C}_4\frac{\Delta t}{\epsilon_0}.
\end{align*}
Now, let us estimate $\tilde{\mathcal{E}}_{22}$.
\begin{align*}
    \abs{\tilde{\mathcal{E}}_{22}}&=\Big|\lambda \int_{Q_T}  \displaystyle \displaystyle\sum_{i}\sum_{n=1}^{N}\sgn(u_i^{n}-u(s,y)) f(u(s,y))\big[\nu(c_{i+\frac{1}{2}}^{n})-\nu(c_{i-\frac{1}{2}}^{n})-\nu(c_{i+\frac{1}{2}}^{n-1})+\nu(c_{i-\frac{1}{2}}^{n-1})\big]\\
& \qquad  \times\int\limits_{C_i}\Phi(s,y,t^{n+1},x) \d x  \d s \d y\Big|\\
&\le \lambda \abs{\nu}_{\lip(\R)}\norma{f(u)}_{L^{\infty}(Q_T)} \int_{Q_T} \sum_{i}\sum_{n=1}^{N}\big|c_{i+\frac{1}{2}}^{n}-c_{i-\frac{1}{2}}^{n}-c_{i+\frac{1}{2}}^{n-1}+c_{i-\frac{1}{2}}^{n-1}\big|\\&\qquad \times \int\limits_{C_i}|\Phi(s,y,t^{n+1},x)| \d x  \d s \d y\\
&= \Delta t\abs{\nu}_{\lip(\R)}\norma{f(u)}_{L^{\infty}(Q_T)}  \sum_{i}\sum_{n=1}^{N}\big|c_{i+\frac{1}{2}}^{n}-c_{i-\frac{1}{2}}^{n}-c_{i+\frac{1}{2}}^{n-1}+c_{i-\frac{1}{2}}^{n-1}\big|.
\end{align*}
Recall, cf.\ \eqref{eq:conv}, that
\[
c_{i+1/2}^n=\Delta x \sum_{p}\beta(u_{p+1/2}^n)\mu_{i+1/2-p},
\]
which implies
\begin{align*}  
&{\sum_{i}\big|(c_{i+1/2}^n-c_{i-1/2}^n)-(c_{i+1/2}^{n-1}-c_{i-1/2}^{n-1})\big|}\\
&\quad\le \Delta x \abs{\beta}_{\lip(\R)}\sum_{i}\Big|\big(\sum_{p}u_{p+1/2}^n\mu_{i+1/2-p}-\sum_{p}u_{p+1/2}^n\mu_{i-1/2-p}\big)\\
&\qquad -\big(\sum_{p}u_{p+1/2}^{n-1}\mu_{i+1/2-p}-\sum_{p}u_{p+1/2}^{n-1}\mu_{i-1/2-p}\big)\Big|\\
&\quad =\Delta x \abs{\beta}_{\lip(\R)}\sum_{i}\big|\sum_{p}u_{p+1/2}^n(\mu_{i+1/2-p}-\mu_{i-1/2-p})\\
&\qquad -\sum_{p}u_{p+1/2}^{n-1}(\mu_{i+1/2-p}-\mu_{i-1/2-p})\big|\\
&\quad \le\Delta x\abs{\beta}_{\lip(\R)}{\sum_{i,p}\big|u_{p+1/2}^n-u_{p+1/2}^{n-1}\big|\big|\mu_{i+1/2-p}-\mu_{i-1/2-p}\big|}\\
&\quad \le  \Delta x\abs{\beta}_{\lip(\R)}|\mu|_{BV(\R)}\sum_{p}\big|u_{p+1/2}^n-u_{p+1/2}^{n-1}\big|\\
&\quad \le\mathcal{C}_5\Delta t,
\end{align*}
by applying \eqref{apx:time}.
Therefore,
\begin{eqnarray*}
   \tilde{\mathcal{E}}_{22}
   &\le &\mathcal{C}_6\Delta t. \end{eqnarray*}
Finally, estimates on the remaining boundary terms $\tilde{\mathcal{E}}_{23}$ and $\tilde{\mathcal{E}}_{24}$, easily follow  from \eqref{cc}.   Specifically,
\begin{align*}
      \tilde{\mathcal{E}}_{23}&=\lambda \int_{Q_T}\displaystyle \displaystyle\sum_{i}\sgn(u_i^{0}-u(s,y)) f(u(s,y))(\nu(c_{i+\frac{1}{2}}^{0})-\nu(c_{i-\frac{1}{2}}^{0}))
\int\limits_{C_i}\Phi(s,y,t^1,x)  \d x\d s  \d y \\
&\le \mathcal{C}_7\Delta t\, \norma{f(u)}_{L^{\infty}(Q_T)}. 
\end{align*}
Similarly,
\[\tilde{\mathcal{E}}_{24}
\le \mathcal{C}_7\Delta t \,\norma{f(u)}_{L^{\infty}(Q_T)}.
\]
Substituting the assertions of \ref{C1} and \ref{C2} in \eqref{est:5} we get
\[
-\Lambda_{\epsilon,\epsilon_0} (u^{\Delta},u)\leq \mathcal{C}\left(\frac{\D x}{\epsilon}+\frac{\D t}{\epsilon_0}\right).
\]
\end{proof}
Now, we state and prove the main result of this paper.
\begin{theorem}\label{CR} [Rate of Convergence]
Let $u$ be the entropy solution of \eqref{IVP:eq}--\eqref{IVP:data} and $u^{\D}$ be the numerical solution given by \eqref{scheme2}.
Then we have the following convergence rate:
 \begin{equation*}
 \norma{u^{\D}(T,\dott)-u(T,\dott)}_{L^1(\R)} = \mathcal{O}(\sqrt{\Delta t}).
 \end{equation*}
\end{theorem}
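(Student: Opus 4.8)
The plan is to obtain the theorem as a corollary of the two lemmas already established, by choosing $v=u^{\D}$ in the Kuznetsov-type estimate (Lemma~\ref{lemma:kuz}) and then optimizing the mollification parameters $\epsilon,\epsilon_0$. First I would check that $u^{\D}\in K$: the $L^\infty$ and $\operatorname{BV}$ bounds \crefrange{apx:Linf}{apx:bv} of Theorem~\ref{thm:2.2} furnish exactly $\norma{u^{\D}}_{L^\infty(\overline{Q}_T)}+|u^{\D}|_{L^\infty_t \operatorname{BV}_x}<\infty$, so Lemma~\ref{lemma:kuz} applies with $v=u^{\D}$ and gives
\begin{equation*}
\norma{u(T,\dott)-u^{\D}(T,\dott)}_{L^1(\R)}\le\mathcal{K}\left(-\Lambda_{\epsilon,\epsilon_0}(u^{\D},u)+\norma{u_0-u^{\D}_0}_{L^1(\R)}+\gamma(u^{\D},\epsilon_0)+\epsilon+\epsilon_0\right).
\end{equation*}
The relative-entropy term on the right is controlled directly by Lemma~\ref{lemma:est}, which bounds $-\Lambda_{\epsilon,\epsilon_0}(u^{\D},u)$ by $\mathcal{C}(\Delta x/\epsilon+\Delta t/\epsilon_0)$.

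It then remains to estimate the two genuinely discrete quantities. For the data term, since $u^{\D}_0$ is the cell-average projection \eqref{apx:data} of $u_0$ and $u_0\in BV(\R)$, a standard approximation estimate gives $\norma{u_0-u^{\D}_0}_{L^1(\R)}\le \Delta x\,\TV(u_0)=\mathcal{O}(\Delta x)$. For the temporal modulus $\gamma(u^{\D},\epsilon_0)$ I would exploit that $u^{\D}$ is piecewise constant in time together with the time-continuity estimate \eqref{apx:time}: if $|t_1-t_2|\le\epsilon_0$ with $t_1\in C^m$ and $t_2\in C^n$, then $|m-n|\Delta t\le\epsilon_0+\Delta t$, so $\norma{u^{\D}(t_1,\dott)-u^{\D}(t_2,\dott)}_{L^1(\R)}=\D x\sum_{i}|u_i^m-u_i^n|\le\mathcal{L}_3(\epsilon_0+\Delta t)$, whence $\gamma(u^{\D},\epsilon_0)\le\mathcal{L}_3(\epsilon_0+\Delta t)$.

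Collecting everything, $\norma{u(T,\dott)-u^{\D}(T,\dott)}_{L^1(\R)}$ is bounded by a constant times $\Delta x/\epsilon+\Delta t/\epsilon_0+\Delta x+\epsilon+\epsilon_0+\Delta t$. The final step is to balance the $\epsilon$- and $\epsilon_0$-dependent pairs: minimizing $\Delta x/\epsilon+\epsilon$ selects $\epsilon=\sqrt{\Delta x}$, and minimizing $\Delta t/\epsilon_0+\epsilon_0$ selects $\epsilon_0=\sqrt{\Delta t}$, contributing $\mathcal{O}(\sqrt{\Delta x})$ and $\mathcal{O}(\sqrt{\Delta t})$ respectively. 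Since the CFL condition fixes $\lambda=\Delta t/\Delta x$ constant, $\Delta x$ and $\Delta t$ are comparable, so every remaining term is $\mathcal{O}(\sqrt{\Delta t})$ and the claimed rate follows. I do not anticipate a serious obstacle here, as the substantive work resides entirely in Lemmas~\ref{lemma:kuz} and~\ref{lemma:est}; the only points needing a little care are the sharp $\mathcal{O}(\Delta x)$ projection bound and the handling of the piecewise-constant-in-time structure in $\gamma(u^{\D},\epsilon_0)$, both of which are routine, and the observation that the optimization is legitimate precisely because the CFL relation ties the two scales together.
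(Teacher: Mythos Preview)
Your proposal is correct and follows essentially the same route as the paper: combine Lemma~\ref{lemma:kuz} (with $v=u^{\D}$) and Lemma~\ref{lemma:est}, bound $\norma{u_0-u^{\D}_0}_{L^1}$ and $\gamma(u^{\D},\epsilon_0)$ using \eqref{apx:data} and \eqref{apx:time}, and optimize the mollification parameters. The only cosmetic difference is that the paper directly sets $\epsilon=\epsilon_0=\sqrt{\Delta t}$ (using the CFL relation $\Delta x=\mathcal{O}(\Delta t)$ up front) rather than choosing $\epsilon=\sqrt{\Delta x}$ first, but this is equivalent.
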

\begin{proof}
The CFL condition implies that $\D x =\mathcal{O} (\D t).$
Furthermore, the initial approximation \eqref{apx:data} implies $\norma{u_0^{\D}-u_0}_{L^1(\R)}=\mathcal{O}(\D t)$.
Now, the desired error estimate follows from Lemma~\ref{lemma:est} and Lemma~\ref{lemma:kuz} setting  $\epsilon=\epsilon_0=\sqrt{\Delta t},$ as $\gamma(u^{\D},\sqrt{\D t}) = \mathcal{O}(\sqrt{\D t}).$
\end{proof}

\section{Extension to Multi Dimensions}\label{exis2}
We consider the case of two space
dimensions and denote the space variables by $(x,y) \in
\R^2$, and consider the following PDE:
\begin{equation}
  \label{eq:1}
  \partial_t u
  +
  \partial_x (f^1(u)\nu^1(\beta^1(u)*\mu^1))
  +
  \partial_y (f^2(u)\nu^2(\beta^2(u)*\mu^2))
  =
  0.
\end{equation}
Further, for numerical scheme, fix a rectangular grid with sizes $\Delta x$ and $\Delta y$ in
$\R^2$ and choose a time step $\Delta t$. For later use, we also
introduce the usual notation
\begin{equation*}
  (t^n, x_i,y_j) = (n\Delta t,i\Delta x, j \Delta y), \quad
  n\in\N,\, i, j\in \Z, \quad
    \lambda_x  = \frac{\Delta t}{\Delta x},  \,
    \lambda_y  =  \frac{\Delta t}{\Delta y}. 
\end{equation*}
Throughout,  we fix initial data $u_0\in (L^{\infty}\cap \operatorname{BV}) (\R^2;
\R)$ and introduce
\[
  u^{0}_{ij}
  =
  \frac{1}{\Delta x \, \Delta y}
  \int_{x_{i-1/2}}^{x_{i+1/2}} \int_{y_{j-1/2}}^{y_{j+1/2}}
  u_0(x,y) \, \d{y}\, \d{x} 
  \quad \mbox{ for } i,j \in \Z.
\]
We define a piecewise constant approximate solution $u^{\Delta}$ 
by
\[
  u^{\Delta}(t,x,y) =  u^n_{ij}\chi_{[t^n, t^{n+1}) \times[x_{i-1/2},x_{i+1/2})\times [y_{j-1/2},y_{j+1/2})}(t,x,y),  \quad
     n\in\N, \,  i,j \in  \Z,
 \]
 where $\chi_A$ denotes the indicator function of a set $A$, 
through the following marching formula based on dimensional splitting,
(see~\cite[Sec.~3]{CrandallMajda1980Monotone} and \cite[Sec.~5]{holden2010splitting} for details):
\begin{align}
  \label{eq:2} 
    u^{n+1/2}_{ij}
    & = 
    u^n_{ij}
    - 
    \lambda_x \bigl[
   \mathcal{F}^{x,n}_{i+1/2,j} (u^n_{ij},u^n_{i+1,j})
     - 
   \mathcal{F}^{x,n}_{i-1/2,j} (u^n_{i-1,j},u^n_{ij})
     \bigr],
    \\[6pt]  \nn
    u^{n+1}_{ij}
    & = 
    u^{n+1/2}_{ij}
     - 
    \lambda_y \bigl[
   \mathcal{F}^{y,n}_{i,j+1/2} (u^{n+1/2}_{ij},u^{n+1/2}_{i,j+1})
    - 
   \mathcal{F}^{y,n}_{i,j-1/2} (u^{n+1/2}_{i,j-1},u^{n+1/2}_{ij})
    \bigr],
\end{align}
where $\mathcal{F}^{x,n}_{i+1/2,j}$ and  $\mathcal{F}^{y,n}_{i,j+1/2}$ denote the numerical approximations of the fluxes $f^1(u) \nu^1(\mu^1*\beta^1(u))$ and $f^2(u) \nu^2(\mu^2*\beta^2(u))$ at the interfaces $(x_{i+1/2},y_j)$ and $(x_{i},y_{j+1/2})$, respectively, for $i,j\in \Z$.
The convolution terms are computed through quadrature formula, i.e.,
\begin{equation}
  \label{eq:AB}
    c^{x,n}_{\strut i+1/2, j}
    =  
    \Delta x \, \Delta y
      \sum_{\strut l,p \in \interi} 
      \mu^{1}_{\strut i+1/2-l, j-p}  \beta^1(u^n_{\strut l+1/2, p}), \quad
    c^{y,n}_{\strut i, j+1/2}
    = 
    \Delta x \, \Delta y
      \sum_{\strut l,p \in \interi}
      \mu^2_{\strut i+1/2-l, j-p}   \beta^2(u^n_{\strut l+1/2, p}) ,
\end{equation}
where, $u^n_{l+1/2,p}$ is any  convex combination of
$u^n_{l,p}$ and $u^n_{l+1,p}$, with
$\mu^1_{i+1/2,j} = \mu^1 (x_{i+1/2}, y_j)$ and $
\mu^2_{i+1/2,j} = \mu^2 (x_{i+1/2}, y_j) \,.$
Throughout, we require that $\Delta t$ is chosen in order to satisfy
the CFL conditions
\begin{equation}
   \label{CFL_LF1}
 \lambda_x\le \frac{\min(1, 4-6\theta_x,6\theta_x )}{1+6\abs{f^1}_{\lip(\R)}\norma{\nu^1}_{L^\infty(\R)}},
  \quad \quad 
  \lambda_y\le \frac{\min(1, 4-6\theta_y,6\theta_y )}{1+6\abs{f^2}_{\lip(\R)}\norma{\nu^2}_{L^\infty(\R)}}, \quad \quad 
  \theta_x,\theta_y \in \left(0,\frac{2}{3} \right),
\end{equation}
and 
\begin{equation*}
   \lambda _x\abs{f^1}_{\lip(\R)}\norma{\nu^1}_{L^\infty(\R)}\leq \frac12, \quad
   \lambda _y\abs{f^2}_{\lip(\R)}\norma{\nu^2}_{L^\infty(\R)}\leq \frac12,
\end{equation*}
   with numerical fluxes $\mathcal{F}^x$ or $\mathcal{F}^y$ chosen as 
   Lax--Friedrichs flux and Godunov flux, respectively. Extension to other monotone fluxes and for higher dimensions is similar. The numerical scheme can now be shown to converge to entropy solution,  see, for example, \cite{aggarwal2015nonlocal}. 
The Kuznetsov Lemma and the theorem on error estimate presented in Section \ref{EE} can now be extended to several space dimensions using dimension splitting arguments (see~\cite[Sec.~4.3]{holden2015front})  with appropriate modifications throughout the proof.

\section{Numerical Experiments}
\label{NR}
We now present some numerical experiments to illustrate the theory presented in the previous section. We show the results for the Lax--Friedrichs scheme. The results obtained by  Godunov scheme  are similar, and are not shown here. Throughout the section, $\theta=\theta_x=\theta_y$ is chosen to be $0.3333$, and $\lambda$ and $\lambda_x=\lambda_y$ are chosen to be $0.1286$ and $0.2857$, respectively, so as to satisfy the CFL condition \eqref{CFL_LF} and \eqref{CFL_LF1}, respectively, in one and two dimensions, for any grid size $\Delta x$ or $\Delta y$ used in this section. 

\subsection{One Dimension}
We employ the nonlocal version of the standard LWR model \eqref{LWR}, i.e., IVP \eqref{IVP:eq}--\eqref{IVP:data}, with 
\[
\mu(x)=\frac{3}{\eta^3}(\eta-x)^2\mathbbm{1}_{(0,\eta)}(x), \quad \eta =0.0625,
\]
$f(u)=u, \beta(r)=r$ and $ \nu(r)=1-r$. This PDE fits the hypothesis of the article.
Further, the domain of integration is chosen to be the interval $[-1.5, 1.5]$ with $t\in[0, 0.5]$, and 
\begin{eqnarray}
    \label{eq:ex1} u_0(x)=0.25\mathbbm{1}_{(-0.9,0.3)}(x)+0.5\mathbbm{1}_{(0.1,0.3)}(x).
\end{eqnarray}

Figure \ref{fig:ex1} displays the numerical approximations of \eqref{IVP:eq}, \eqref{eq:ex1} generated by the numerical scheme \eqref{scheme2}, 
\begin{figure}[ht!]
\includegraphics[width=\textwidth,keepaspectratio]{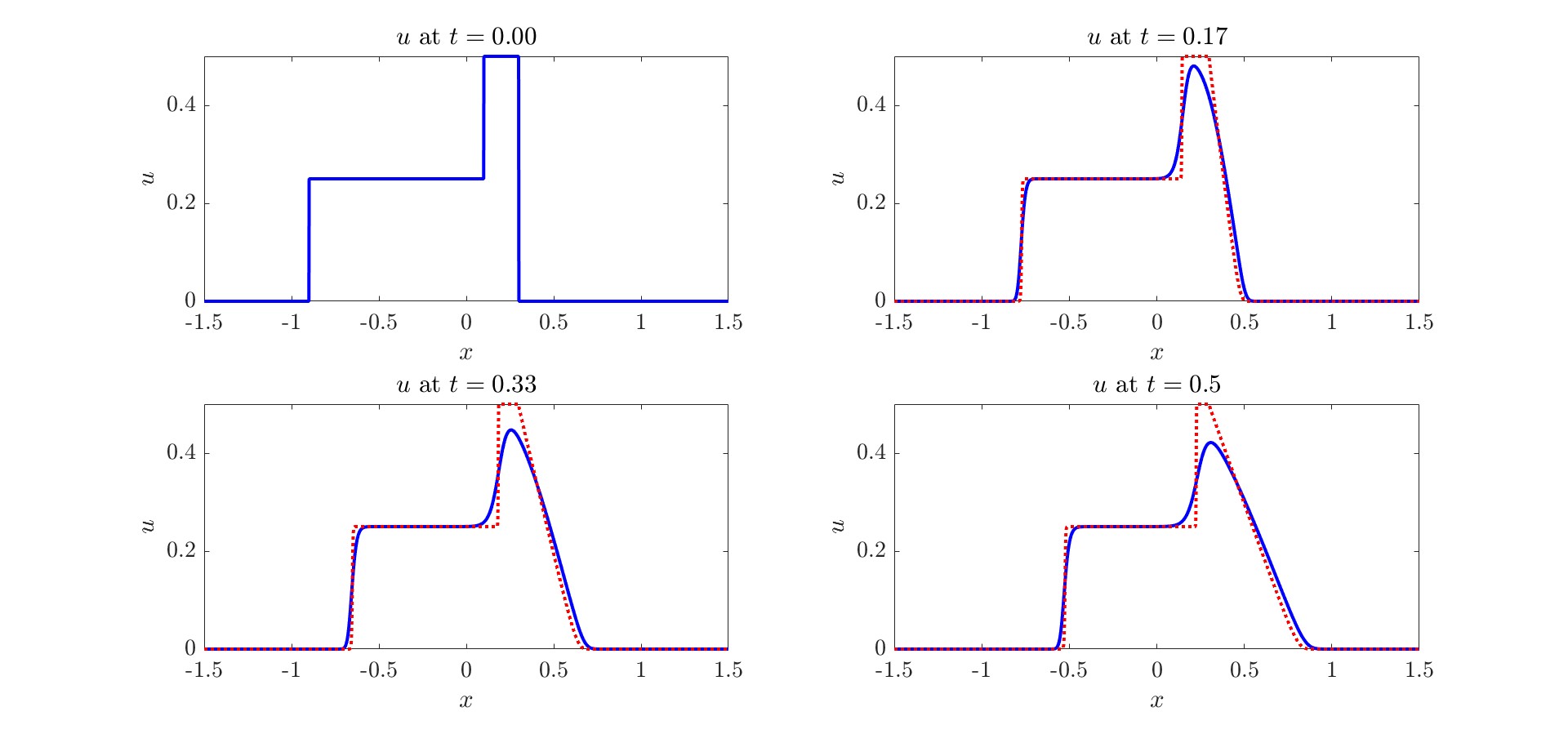}  
\caption{Solution to the nonlocal conservation law~\eqref{IVP:eq}, \eqref{eq:ex1} ({\color{blue}\full}).  Reference solution to the local conservation law ~\eqref{LWR}, \eqref{eq:ex1}  ({\color{red}\dotted}) on the domain $[-1.5, \, 1.5]$ at times $t =
    0.00,\; 0.017,\;0.33, \: 0.5$, and  $\Delta x =0.0015625$.}
  \label{fig:ex21}
\end{figure} 
using space mesh
    $\Delta x =0.0015625$. The numerical solution of the corresponding local version 
    \eqref{LWR}, \eqref{eq:ex1}, obtained using standard Godunov scheme, has also been plotted at the respective times alongside the nonlocal solutions, as a reference to indicate the difference between the solutions of these two PDEs. Figure \ref{fig:ex21} shows the the numerical scheme is able to capture both shocks and rarefactions well, and that the density does not cross the maximal density 1.

To compute the convergence rate of the scheme~\eqref{scheme2}, we obtain numerical approximations to ~\eqref{IVP:eq}, \eqref{eq:ex1}  with
decreasing grid sizes $\Delta x$, starting with $\Delta x =0.00625$.  The convergence rate $\alpha$
is then calculated at time $T=0.5$ by computing the $L^1$ distance
between the numerical solutions $u_{\Delta x}(T,\dott)$ and $\displaystyle u_{\Delta x/2}(T,\dott)$ obtained
for the grid size $\Delta x$ and $\frac{1}{2}\Delta x$, for each grid size $\Delta x$. The results recorded in Table \ref{Table81} and Figure \ref{fig:my_label21} show
that the observed convergence rates lie strictly between $0.5$ and $1$.  
\begin{figure}[ht!]
\centering
\includegraphics[width=\textwidth,keepaspectratio]{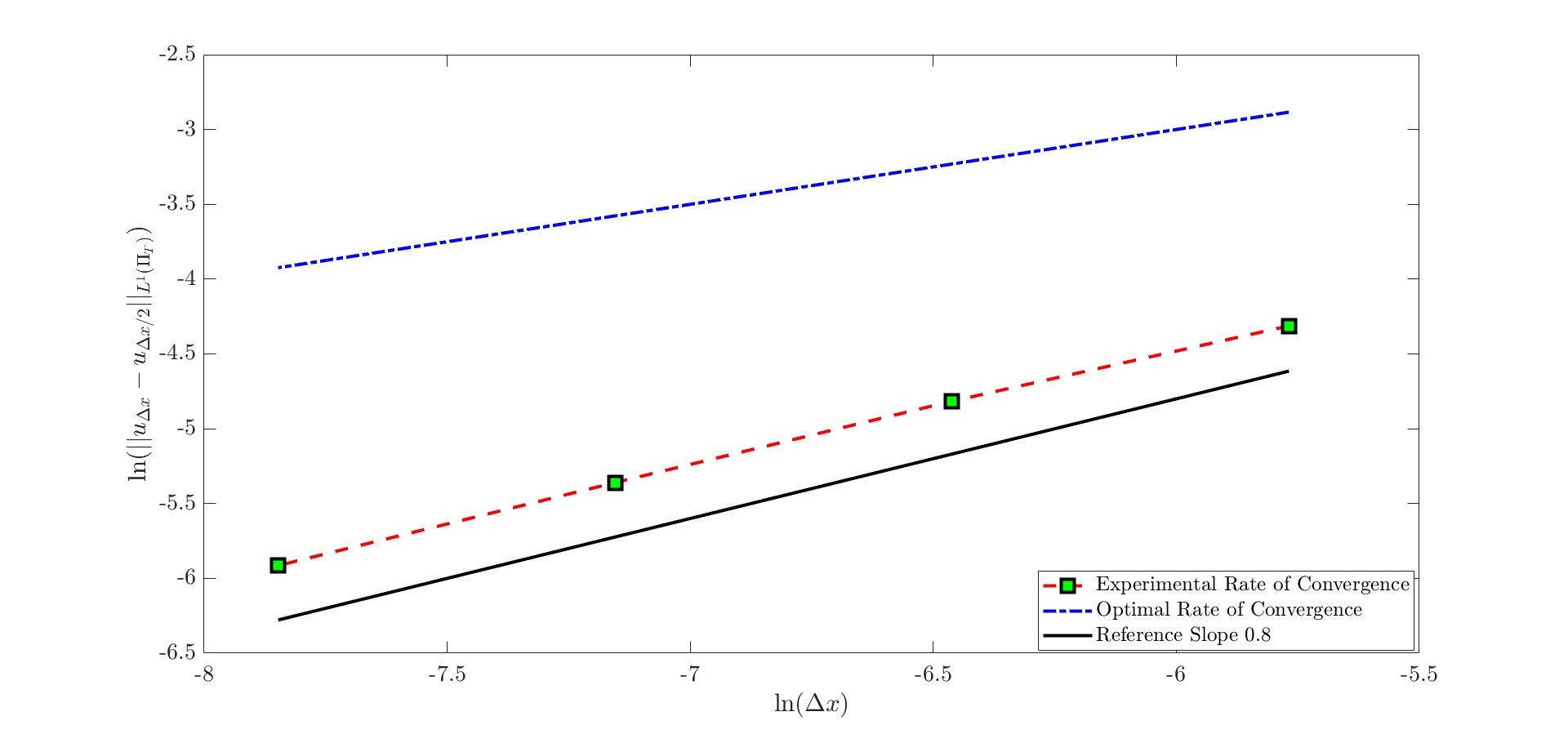}
    \caption{Convergence rate $\alpha$ for the numerical scheme~\eqref{scheme2}  for the approximate solutions
  to the problem~\eqref{IVP:eq}, \eqref{eq:ex1} on the domain $[-1.5, \, 1.5]$ at time $T=0.5.$}
   \label{fig:my_label21}
\end{figure}
\begin{table}[ht!]{
 \centerline{
   \begin{tabular}{|c|c|c|c|c|c|c|c|c|c|}\hline
     \multicolumn{1}{|c|}{ $\displaystyle{\D x}$}&\multicolumn{1}{|c|}{$\displaystyle\norma{u_{\Delta x}(T,\dott)-u_{\Delta x/2}(T,\dott)}_{L^1(\R)}$}\vline & \multicolumn{1}{|c|}{$\displaystyle\log_{2}\frac{\norma{u_{\Delta x}(T,\dott)-u_{\Delta x/2}(T,\dott)}_{L^1(\R)}}{\norma{u_{\Delta x/2}(T,\dott)-u_{\Delta x/4}(T,\dott)}_{L^1(\R)}}$}\vline\\
     \hline
     $0.00625$&$0.0034$&$0.7262$\tabularnewline
     \hline
     $0.003125$&$0.0081$&$0.7853$\tabularnewline
     \hline
     $0.0015625$&$0.0047$&$0.7997$\tabularnewline
     \hline
$0.00078125$&$0.0027$&\tabularnewline
     \hline
    \end{tabular}}}
\caption{Convergence rate $\alpha$ for the numerical scheme~\eqref{scheme2} on the domain $[-1.5, \, 1.5]$ at time $T=0.5$ for the approximate solutions
  to the problem~\eqref{IVP:eq}, \eqref{eq:ex1}.}\label{Table81}
\end{table}
The present numerical integration resonates well with the theoretical convergence rate obtained in Theorem \ref{CR} in this article.

Figure \ref{fig:ex211} illustrates the nonlocal to local limit, see \cite{bressan2020traffic,colombo2019singular,keimer2019approximation} and references therein, namely that the entropy solutions of the nonlocal conservation laws converge to the entropy solution of the corresponding local conservation law as the radius of the kernel goes to zero. This numerical example indicates that also the present model has singular solutions. 
\begin{figure}[ht!]
\includegraphics[width=\textwidth,keepaspectratio]{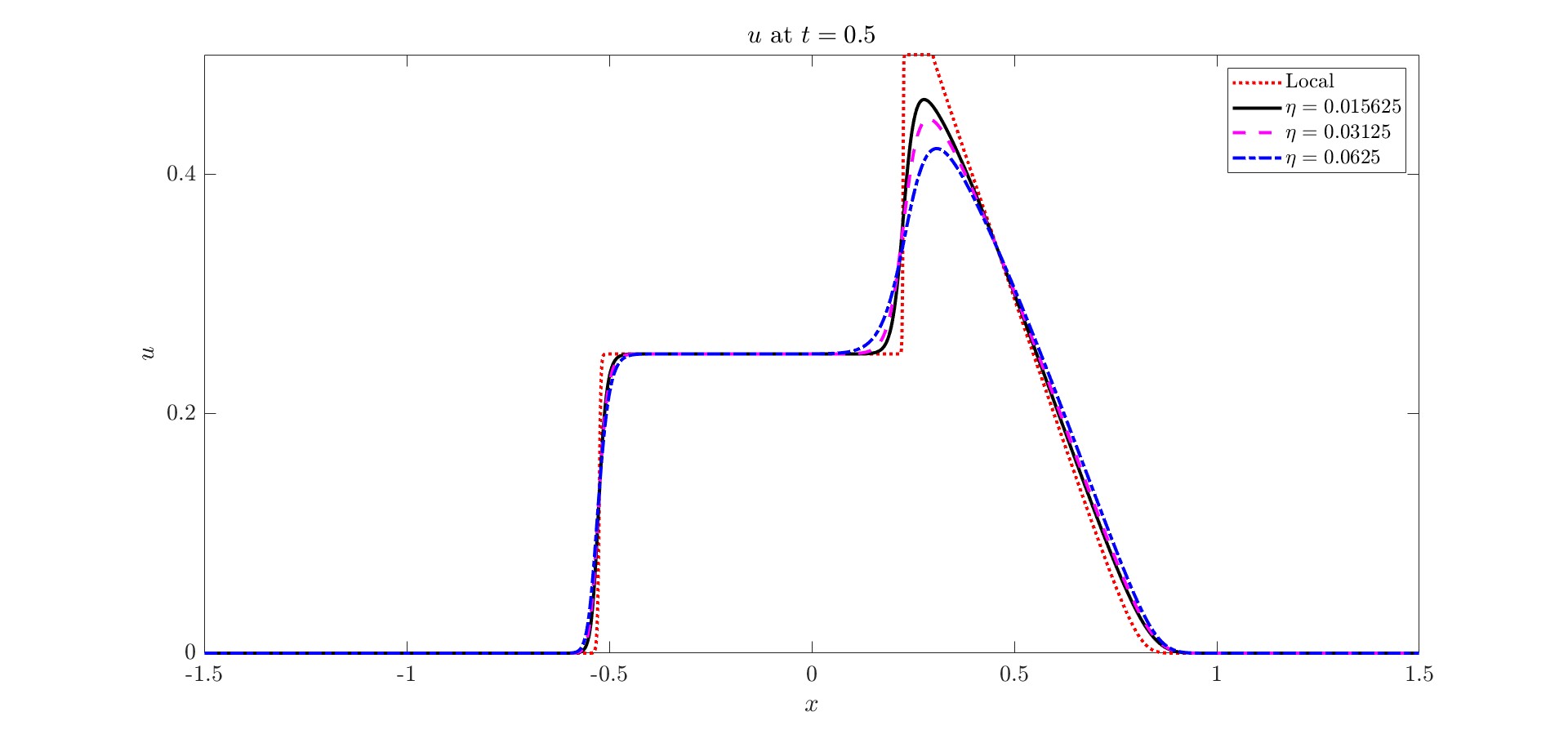}  \caption{Domain $[-1.5, \, 1.5],  T=0.5, \Delta x =0.0015625$: Solution to the local conservation law ~\eqref{LWR}, \eqref{eq:ex1} ({\color{red}\dotted}); Solution to the nonlocal conservation law~\eqref{IVP:eq}, \eqref{eq:ex1} with decreasing convolution radii $\eta=0.015625,0.03125,0.0625$.}
  \label{fig:ex211}
\end{figure}
\subsection{Two Dimensions}
To illustrate our results in two dimensions, we employ the model introduced
in~\cite{aggarwal2015nonlocal}, modeling crowd dynamics in two dimensions, which fits in the framework
of the article. Assume that a group of pedestrians 
in a square room $[-4,4]^2$, can be described
through the density $u = u (t,x,y)$ that satisfies the nonlocal
conservation law
\begin{equation}
  \label{eq:5}
  \partial_t u
  +
  \nabla \cdot \left(
    u \, (1 - u )  \, (1 - u * \mu) 
  \right)
  =
  0,
\end{equation}
where the smooth, non-negative and compactly supported function
$\mu$ models the way in which each individual averages the density
around her/his position to adjust her/his speed.
\begin{figure}[ht!]
  \centering
\includegraphics[width=0.45\textwidth,keepaspectratio]{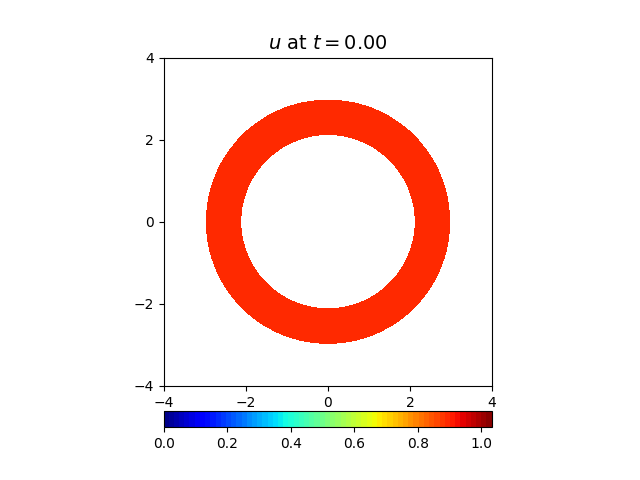}
\includegraphics[width=0.45\textwidth,keepaspectratio]{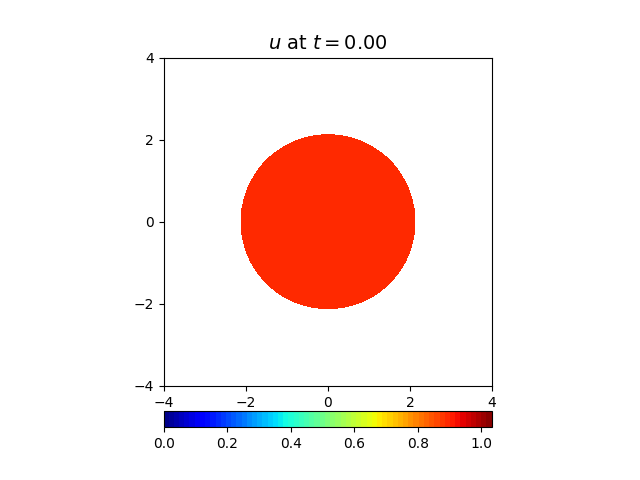}\\
\includegraphics[width=0.45\textwidth,keepaspectratio]{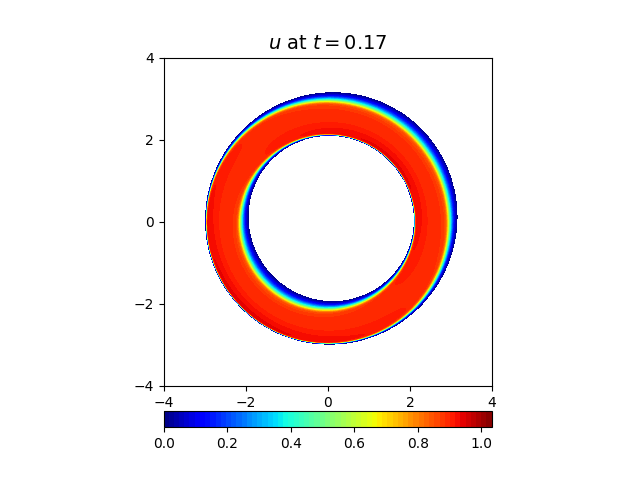}
\includegraphics[width=0.45\textwidth,keepaspectratio]{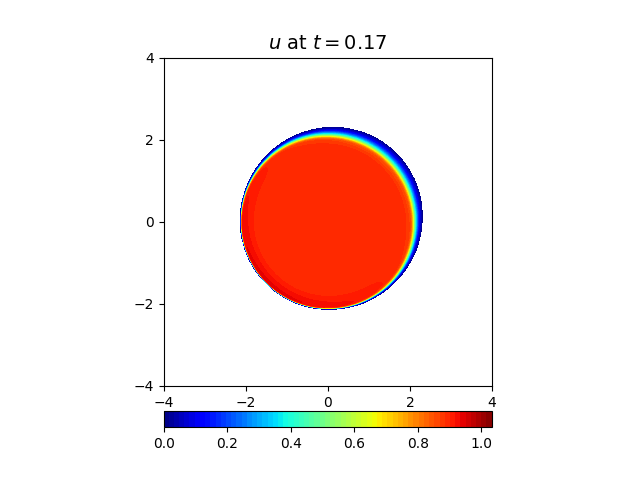}\\
\includegraphics[width=0.45\textwidth,keepaspectratio]{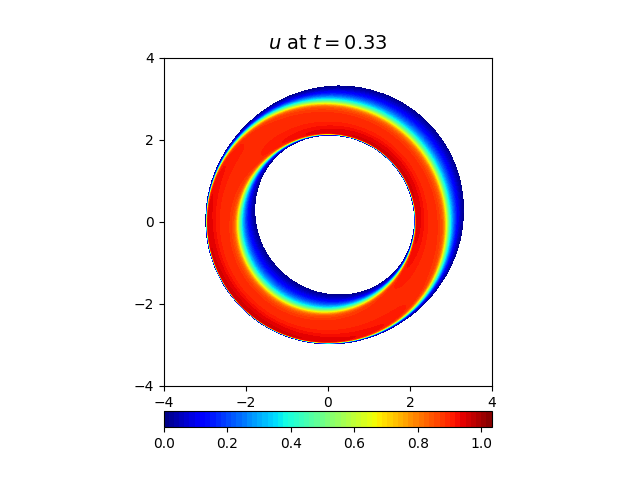}
\includegraphics[width=0.45\textwidth,keepaspectratio]{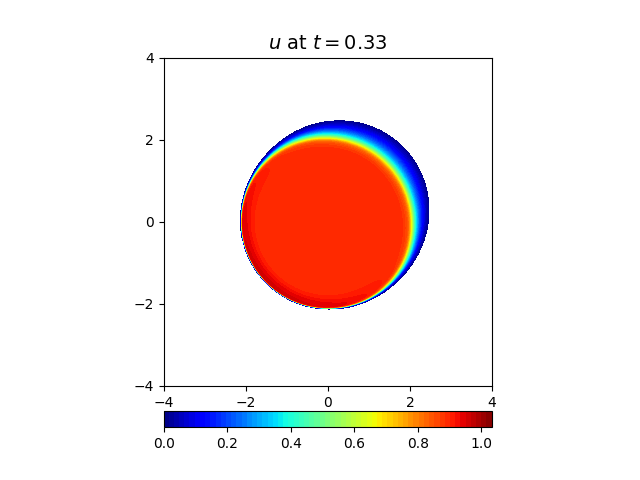}\\
\includegraphics[width=0.45\textwidth,keepaspectratio]{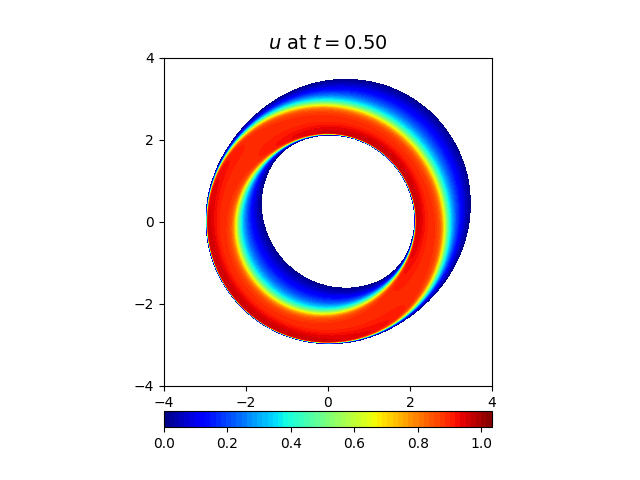}
\includegraphics[width=0.45\textwidth,keepaspectratio]{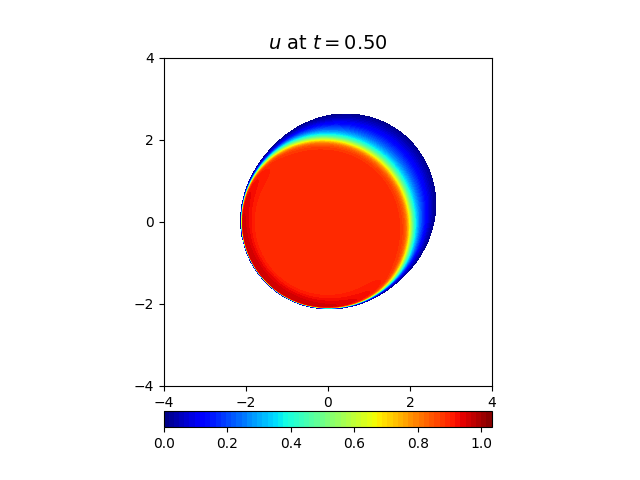}\\
  \caption{Solution to~\eqref{eq:5}, \eqref{eq:ex1:ID} (Left)  and  ~\eqref{eq:5}, \eqref{eq:ex2:ID} (Right) at times $t =
    0.00,\; 0.017,\;0.33, \: 0.5$ with space mesh
    $\Delta x = \Delta y = 0.00625$.}
  \label{fig:ex1}
\end{figure}

We choose:
\begin{equation}
  \label{eq:6}
\tilde \mu (x,y)
   = 
  (0.16 - x^2 - y^2)^3 \;
  \chi_{\strut \{(x,y) \colon x^2 + y^2 \leq 0.16\}} (x,y),
  \quad
  \mu (x,y)
  =
  \big(\iint_{\R^2} \tilde \mu \d x\d y\big)^{-1}\tilde \mu (x,y)\d{x} \d{y},
\end{equation}
so that $\iint_{\R^2} \mu (x,y) \d{x} \d{y} =1$.  

 As initial
data, we consider two cases:
\begin{enumerate}
    \item Annular initial data, where the crowd is concentrated in an annulus:
    \begin{equation}
  \label{eq:ex1:ID}
  u_0 (x,y)
  =
  \chi_{\strut \{(x,y) \colon 4 \le x^2 + y^2\le 9\}} (x,y). 
\end{equation}
\item Circular initial data, where the crowd is concentrated in a circle: \begin{equation}
  \label{eq:ex2:ID}
  u_0 (x,y)
  =
  \chi_{\strut \{(x,y) \colon  x^2 + y^2\le 4\}} (x,y).
\end{equation}
\end{enumerate}

  The system~\eqref{eq:5} fits into the setting of~\eqref{eq:2} with
  \begin{equation*}
    \label{eq:ex1:table}
      f^k(u)=u(1-u), \, \nu^k(u)=1,\, \beta^k(u)=1-u, \,
       \mu^k  = \mu, \quad k=1,2.
  \end{equation*}
The numerical integrations of~\eqref{eq:5}, \eqref{eq:ex1:ID} and~\eqref{eq:5}, \eqref{eq:ex2:ID} are obtained by the
the algorithm described in Section~\ref{exis2} and are shown in
Figure~\ref{fig:ex1}. The figures depict that the density does not cross the maximal density 1 and the numerical simulations are able to capture the physical properties well.

To compute the convergence rate of the scheme~\eqref{eq:2} with 
Lax--Friedrichs flux, we apply
the algorithm to problem~\eqref{eq:5}, \eqref{eq:ex1:ID} and ~\eqref{eq:5}, \eqref{eq:ex2:ID} on the domain
$[-4, \, 4]^2$ on the time interval $[0,0.5]$ with
different grid sizes with $\lambda_x = \lambda_y = 0.2857$. The convergence rate $\alpha$
is then calculated at time $T=0.5$ by computing the $L^1$ distance
between the numerical solutions $u_{\Delta x}(T,\dott)$ and $\displaystyle u_{\frac{\Delta x}{2}}(T,\dott)$ obtained
for the grid size $\Delta x$ and $\frac{\Delta x}{2}$, for each grid size $\Delta x$. The results recorded in Table \ref{Table8} and Figure \ref{fig:my_label1} show
that the observed convergence rates lie strictly between $0.5$ and $1$. 
\begin{figure}[ht!]
\centering\includegraphics[width=.9\textwidth,keepaspectratio]{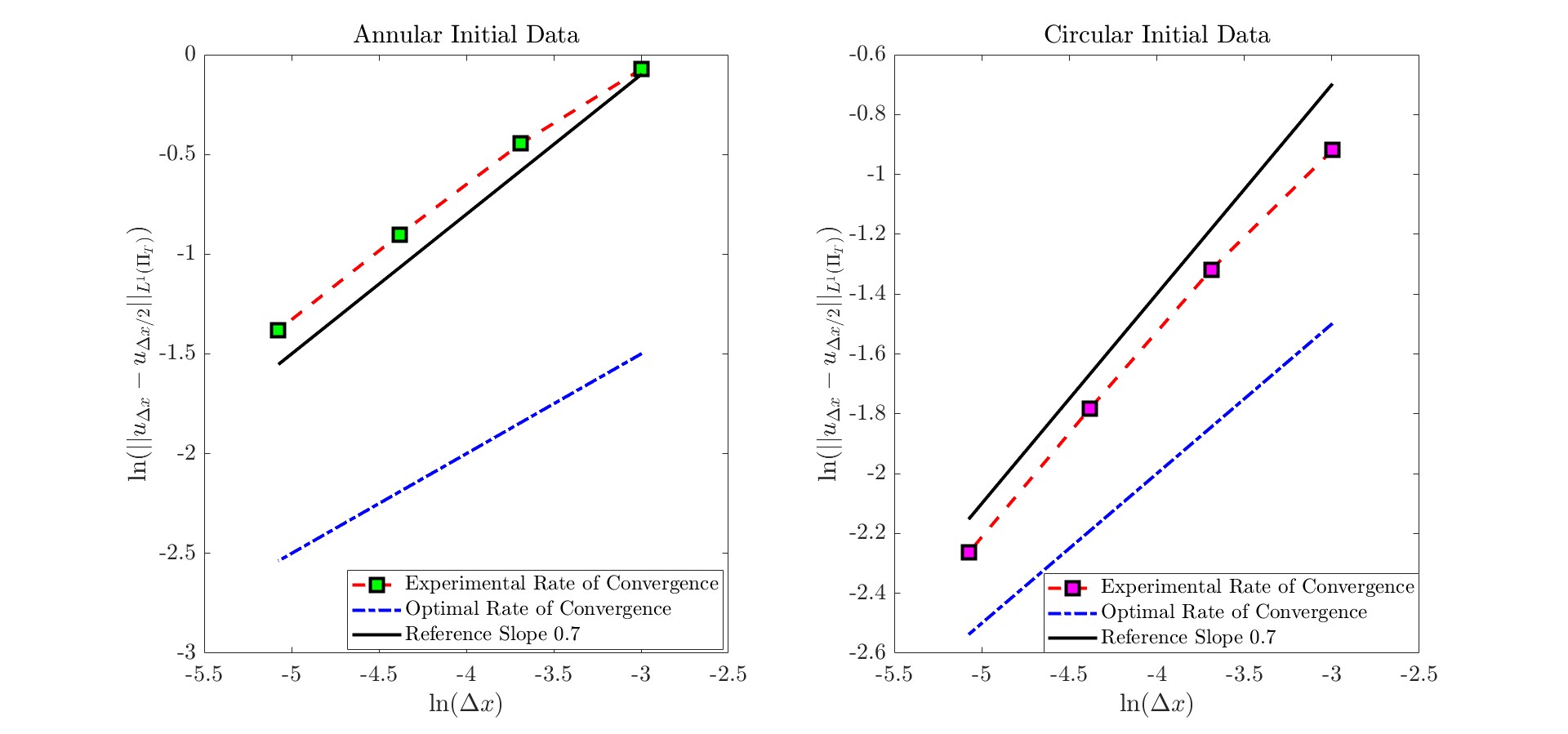}
    \caption{Convergence rate $\alpha$ for the numerical scheme~\eqref{scheme2} on the domain $[-4, \, 4]^2$ at time $T=0.5$ for the approximate solutions
  to the problem~\eqref{eq:5}, \eqref{eq:ex1:ID} and~\eqref{eq:5}, \eqref{eq:ex2:ID}.}
   \label{fig:my_label1}
\end{figure}
\begin{table}[ht!]{
 \centerline{
   \begin{tabular}{|c|c|c|c|c|c|c|c|c|c|}\hline
     \multicolumn{1}{|c|}{}&\multicolumn{2}{|c|}{$\displaystyle\norma{u_{\Delta x}(T,\dott)-u_{\frac{\Delta x}{2}}(T,\dott)}_{L^1(\R)}$}\vline & \multicolumn{2}{|c|}{$\displaystyle\log_{2}\frac{\norma{u_{\Delta x}(T,\dott)-u_{\frac{\Delta x}{2}}(T,\dott)}_{L^1(\R)}}{\norma{u_{\frac{\Delta x}{2}}(T,\dott)-u_{\frac{\Delta x}{4}}(T,\dott)}_{L^1(\R)}}$}\vline\\
     \hline
     \backslashbox{$\Delta x$}{Initial Data}
     & Annular & Circular& Annular & Circular\tabularnewline
     \hline
     $0.05$&  $0.9314$&$0.3989$&$0.5406$&$0.5425$\tabularnewline
     \hline
     $0.025$&$0.6403$ &$0.2677$&$0.6580$&$0.6704$ \tabularnewline
     \hline
     $0.0125$&$0.4057$ &$0.1682$&$0.6901$&$0.6954$\tabularnewline
     \hline
     $0.00625$&$0.2515$ &$0.1039$&&\tabularnewline
     \hline
\end{tabular}}}
\caption{Convergence rate $\alpha$ for the numerical scheme~\eqref{scheme2} on the domain $[-4, \, 4]^2$ at time $T=0.5$ for the approximate solutions
  to the problem~\eqref{eq:5}, \eqref{eq:ex1:ID} and~\eqref{eq:5}, \eqref{eq:ex2:ID}.}\label{Table8}
\end{table}
The present numerical integration resonate well with theoretical convergence rate obtained in Theorem \ref{CR} in this article. 

\section*{Conclusions}\label{fu}
In this article, we have established the convergence rate estimates for scalar nonlocal nonlinear conservation laws, modeling traffic and crowd dynamics, with no additional assumptions on monotonicity/linearity of the kernel or the flux. The rate is shown to be 1/2 which is consistent with its local counterparts. It is interesting to see that the results are independent of the radius of the convolution matrix and monotonicity of the kernel. For $\nu(u)=\beta(u)=1,$ the nonlocal conservation laws boil down to the local conservation law and hence the convergence rate $1/2$ is optimal due to \cite{sabac_optimal}.

The extensions of these results to a general coupled system of nonlocal conservation laws, for the convergent finite volume schemes proposed in \cite{aggarwal2015nonlocal} and also for nonlocal conservation laws with discontinuous flux, are not straightforward and are works in progress. Furthermore, using the Kuznetsov-type lemma  proved in this article, the rate at which the solutions of the nonlocal FTL (see \cite{nftl19,coclite2023nonlocal}) models converge to its continuum limit, can be explored, which we aim to address in our upcoming article.

\section*{Acknowledgement}
The work was carried out during the GV's tenure of the ERCIM ‘Alain Bensoussan’ Fellowship Programme at NTNU.  The project was supported in part by the project \textit{IMod --- Partial differential equations, statistics and data:
An interdisciplinary approach to data-based modelling}, project number 325114, from the Research Council of Norway, and by AA's faculty development allowance, funded by IIM Indore.
\bibliographystyle{abbrv}
\bibliography{AHV}

\def\cprime{$'$} \def\cprime{$'$}
\begin{thebibliography}{10}

\bibitem{aggarwal2015nonlocal}
A.~Aggarwal, R.~M. Colombo, and P.~Goatin.
\newblock Nonlocal systems of conservation laws in several space dimensions.
\newblock {\em SIAM J. Numer. Anal.}, 53(2):963--983, 2015.

\bibitem{aggarwal2016crowd}
A.~Aggarwal and P.~Goatin.
\newblock Crowd dynamics through non-local conservation laws.
\newblock {\em Bull. Braz. Math. Soc. (N.S.)}, 47(1):37--50, 2016.

\bibitem{AmadoriShen2012}
D.~Amadori and W.~Shen.
\newblock An integro-differential conservation law arising in a model of
  granular flow.
\newblock {\em J. Hyperbolic Differ. Equ.}, 9(1):105--131, 2012.

\bibitem{AmorimColomboTeixeira}
P.~Amorim, R.~M. Colombo, and A.~Teixeira.
\newblock On the numerical integration of scalar nonlocal conservation laws.
\newblock {\em ESAIM Math. Model. Numer. Anal.}, 49(1):19--37, 2015.

\bibitem{betancourt2011nonlocal}
F.~Betancourt, R.~B{\"u}rger, K.~H. Karlsen, and E.~M. Tory.
\newblock On nonlocal conservation laws modelling sedimentation.
\newblock {\em Nonlinearity}, 24(3):855, 2011.

\bibitem{blandin2016well}
S.~Blandin and P.~Goatin.
\newblock Well-posedness of a conservation law with non-local flux arising in
  traffic flow modeling.
\newblock {\em Numer. Math.}, 132(2):217--241, 2016.

\bibitem{BK98}
F.~Bouchut and B.~Perthame.
\newblock Kruzkov’s estimates for scalar conservation laws revisited.
\newblock {\em Trans. Amer. Math. Soc.}, 350(7):2847--2870, 1998.

\bibitem{boudin2012numerical}
L.~Boudin and J.~Mathiaud.
\newblock A numerical scheme for the one-dimensional pressureless gases system.
\newblock {\em Numer. Methods Partial Differential Equations},
  28(6):1729--1746, 2012.

\bibitem{bressan2020traffic}
A.~Bressan and W.~Shen.
\newblock On traffic flow with nonlocal flux: a relaxation representation.
\newblock {\em Arch. Ration. Mech. Anal.}, 237(3):1213--1236, 2020.

\bibitem{coclite2023nonlocal}
G.~M. Coclite, K.~H. Karlsen, and N.~H. Risebro.
\newblock A nonlocal lagrangian traffic flow model and the zero-filter limit.
\newblock {\em arXiv:2302.03889}, 2023.

\bibitem{colombo2019singular}
M.~Colombo, G.~Crippa, and L.~V. Spinolo.
\newblock On the singular local limit for conservation laws with nonlocal
  fluxes.
\newblock {\em Arch. Ration. Mech. Anal.}, 233(3):1131--1167, 2019.

\bibitem{ColomboGaravelloLecureux2012}
R.~M. Colombo, M.~Garavello, and M.~L{\'e}cureux-Mercier.
\newblock A class of nonlocal models for pedestrian traffic.
\newblock {\em Math. Mod. Met. Appl. Sci.}, 22(4):1150023, 2012.

\bibitem{ColomboHertyMercier}
R.~M. Colombo, M.~Herty, and M.~Mercier.
\newblock Control of the continuity equation with a non local flow.
\newblock {\em ESAIM Control Optim. Calc. Var.}, 17(2):353--379, 2011.

\bibitem{ColomboLecureuxPerDafermos}
R.~M. Colombo and M.~L{\'e}cureux-Mercier.
\newblock Nonlocal crowd dynamics models for several populations.
\newblock {\em Acta Math. Sin.}, 32(1):177--196, 2011.

\bibitem{CrandallMajda1980Monotone}
M.~G. Crandall and A.~Majda.
\newblock Monotone difference approximations for scalar conservation laws.
\newblock {\em Math. Comp.}, 34(149):1--21, 1980.

\bibitem{nftl19}
M.~D. Francesco, S.~Fagioli, and E.~Radici.
\newblock Deterministic particle approximation for nonlocal transport equations
  with nonlinear mobility.
\newblock {\em J. Differential Equations}, 266(5):2830--2868, 2019.

\bibitem{friedrich2022conservation}
J.~Friedrich, S.~G{\"o}ttlich, A.~Keimer, and L.~Pflug.
\newblock Conservation laws with nonlocal velocity--the singular limit problem.
\newblock {\em arXiv:2210.12141}, 2022.

\bibitem{friedrich2018godunov}
J.~Friedrich, O.~Kolb, and S.~G{\"o}ttlich.
\newblock A {G}odunov type scheme for a class of {LWR} traffic flow models with
  non-local flux.
\newblock {\em Netw. Heterog. Media}, 13(4):531--547, 2018.

\bibitem{GTV_NM}
S.~S. Ghoshal, J.~D. Towers, and G.~Vaidya.
\newblock A {G}odunov type scheme and error estimates for scalar conservation
  laws with {P}anov-type discontinuous flux.
\newblock {\em Numer. Math.}, 151:601--625, 2022.

\bibitem{gottlich2014modeling}
S.~G{\"o}ttlich, S.~Hoher, P.~Schindler, V.~Schleper, and A.~Verl.
\newblock Modeling, simulation and validation of material flow on conveyor
  belts.
\newblock {\em Appl. Math. Model.}, 38(13):3295--3313, 2014.

\bibitem{holden2010splitting}
H.~Holden, K.~H. Karlsen, K.-A. Lie, and N.~H. Risebro.
\newblock {\em Splitting methods for partial differential equations with rough
  solutions: Analysis and MATLAB programs}.
\newblock EMS Publishing House, Zürich, 2010.

\bibitem{holden2015front}
H.~Holden and N.~H. Risebro.
\newblock {\em Front Tracking for Hyperbolic Conservation Laws}.
\newblock Springer, 2015.

\bibitem{karlsen2003uniqueness}
K.~H. Karlsen and N.~H. Risebro.
\newblock On the uniqueness and stability of entropy solutions of nonlinear
  degenerate parabolic equations with rough coefficients.
\newblock {\em Discrete Contin. Dyn. Syst.}, 9(5):1081, 2003.

\bibitem{keimer2017existence}
A.~Keimer and L.~Pflug.
\newblock Existence, uniqueness and regularity results on nonlocal balance
  laws.
\newblock {\em J. Differential Equations}, 263(7):4023--4069, 2017.

\bibitem{keimer2019approximation}
A.~Keimer and L.~Pflug.
\newblock On approximation of local conservation laws by nonlocal conservation
  laws.
\newblock {\em J. Math. Anal. Appl.}, 475(2):1927--1955, 2019.

\bibitem{keimer2018nonlocal}
A.~Keimer, L.~Pflug, and M.~Spinola.
\newblock Nonlocal scalar conservation laws on bounded domains and applications
  in traffic flow.
\newblock {\em SIAM J. Math. Anal.}, 50(6):6271--6306, 2018.

\bibitem{kruvzkov1970first}
S.~N. Kru{\v{z}}kov.
\newblock First order quasilinear equations in several independent variables.
\newblock {\em Math. USSR Sbornik}, 10(2):217--243, 1970.

\bibitem{Kuznetsov}
N.~N. Kuznetsov.
\newblock Accuracy of some approximate methods for computing the weak solutions
  of a first-order quasi-linear equation.
\newblock {\em USSR Comput. Math. and Math. Phys.}, 16(6):105--119, 1976.

\bibitem{lighthill1955kinematic}
M.~J. Lighthill and G.~B. Whitham.
\newblock On kinematic waves ii. a theory of traffic flow on long crowded
  roads.
\newblock {\em Proc. Roy. Soc. London. Ser. A}, 229(1178):317--345, 1955.

\bibitem{Perthame2007}
B.~Perthame.
\newblock {\em Transport Equations in Biology}.
\newblock Frontiers in Mathematics. Birkh\"auser Verlag, Besal, 2007.

\bibitem{richards1956shock}
P.~I. Richards.
\newblock Shock waves on the highway.
\newblock {\em Oper. Res.}, 4(1):42--51, 1956.

\bibitem{sabac_optimal}
F.~Sabac.
\newblock The optimal convergence rate of monotone finite difference methods
  for hyperbolic conservation laws.
\newblock {\em SIAM J. Numer. Anal.}, 34(6):2306--2318, 1997.

\end{thebibliography}
 \end{document}